\documentclass[12pt,reqno]{amsart}
\usepackage{graphicx, amssymb, xcolor,pdfsync}
\usepackage[all,cmtip]{xy}
\numberwithin{equation}{section}
\usepackage{mathrsfs}
\usepackage{tikz}
\usepackage{tikz-cd}
\usepackage{faktor}

 \usepackage{mathpazo}
\usepackage{hyperref}
\usepackage{graphicx}
\usepackage{yhmath}
\usepackage{mathdots}
\usepackage{MnSymbol}

\hypersetup{
    colorlinks=true,       
    linkcolor=blue,          
    citecolor=blue,        
    filecolor=blue,      
    urlcolor=blue           
}
\usepackage{tikz}
\usetikzlibrary{matrix,arrows}

\usepackage[switch]{lineno}
\usepackage{yfonts}

\topmargin0.in
\textheight8.6in
\oddsidemargin0.2in
\evensidemargin0.2in
\textwidth6.8in
\advance\hoffset by -0.9 truecm

\begin{document}
\newcommand{\s}{\vspace{0.2cm}}
\newcommand{\Sp}{\mbox{Sp}}
\newcommand{\sy}{\mbox{sym}}
\newcommand{\GP}{\mathscr{C}_p}
\newcommand{\FP}{\mathcal{F}_p}
\newcommand{\G}{\mathbb{Z}_p^2 \rtimes \mathbf{D}_3}
\newcommand{\Z}{\mathbb{Z}}
\newcommand{\D}{\mathbf{D}}
\newcommand{\HH}{\mathbb{H}}
\newcommand{\rr}{\mbox{rot}}

\newtheorem{theo}{Theorem}
\newtheorem{prop}{Proposition}
\newtheorem{coro}{Corollary}
\newtheorem{lemm}{Lemma}
\newtheorem{claim}{Claim}
\newtheorem{example}{Example}
\theoremstyle{remark}
\newtheorem*{rema}{\it Remark}
\newtheorem*{rema1}{\it Remark}
\newtheorem*{defi}{\it Definition}
\newtheorem*{theo*}{\bf Theorem}
\newtheorem*{coro*}{Corollary}
\newtheorem{hecho}[theo]{\bf Hecho}

\title[A generalisation of the pencil of  Kuribayashi-Komiya quartics]{A generalisation of the pencil of   Kuribayashi-Komiya quartics}
\date{}

\author{Valentina Moreno Vega and Sebasti\'an Reyes-Carocca}
\address{Departamento de Matem\'aticas, Facultad de Ciencias, Universidad de Chile, Las Palmeras 3425, Santiago, Chile}
\email{valentina.moreno.1@ug.uchile.cl, sebastianreyes.c@uchile.cl}

\thanks{Partially supported by ANID Fondecyt Grants 1220099 and 1230708, ANID Beca Mag\'ister 8964 and Facultad de Ciencias, Universidad de Chile}
\keywords{Riemann surfaces, algebraic curves, group actions, automorphisms}
\subjclass[2010]{30F10, 14H37, 30F35, 14H30}

\begin{abstract} 
The pencil of Kuribayashi-Komiya quartics $$
x^4 + y^4 + z^4 + t(x^2y^2 + x^2z^2 + y^2z^2)=0 \, \mbox{ where } t \in \bar{\mathbb{C}}
$$is a complex one-dimensional family of Riemann surfaces of genus three endowed with a group of automorphisms isomorphic to the symmetric group of order twenty-four. This pencil has been extensively studied from different points of view. 

This paper is aimed at studying, for each prime number  $p \geqslant 5$, the pencil of \textit{generalised Kuribayashi-Komiya curves} $\mathcal{F}_p$, given by the curves
$$x^{2p}+y^{2p}+z^{2p}+t(x^p y^p +x^p z^p +y^p z^p)=0\mbox{ where } t \in \bar{\mathbb{C}}.$$ 

We determine the full automorphism group $G$ of each smooth member $X \in \mathcal{F}_p$ and study the action of $G$ and of its subgroups on $X$. In particular, we show that no member of the pencil is hyperelliptic. As a by-product, we derive a classification of all those Riemann surfaces of genus $(p-1)(2p-1)$ that are endowed with a group of automorphisms isomorphic to the full automorphism group of the generic smooth member of $\mathcal{F}_p.$
\end{abstract}
\maketitle
\thispagestyle{empty}

\section{Introduction}

The problem of the classification of (smooth complex projective) algebraic curves or, equivalently, compact Riemann surfaces according to their symmetries is a classical subject in algebraic geometry.  One of the first questions asked about these objects was if they had any automorphisms, and if so how many of them. Whereas the algebraic curves of genus zero and one have infinitely many automorphisms, if $X$ is an algebraic curve of genus $g \geqslant 2$ then $X$ has finitely many automorphisms, and$$|\mbox{Aut}(X)|\leqslant 84(g-1).$$This is a well-known result due to Schwarz \cite{Sch1879} and Hurwitz \cite{Hu}. Ever since, a classical problem has been the determination of the full automorphism group of a given algebraic curve. Besides, a seminal result in the area claims that each abstract finite group can be realised as a group of automorphisms of some algebraic curve \cite{Gre}. 

\s

Articles aimed at studying group actions of special classes of groups can be found in the literature in plentiful supply. We refer to \cite{B22} and \cite{BCsurvey} for an up-to-date treatment of this topic. We refer to Kuribayashi-Komiya \cite{KK79} and Broughton \cite{B91} for the automorphism group of algebraic curves genus two and three, and Bartolini, Costa and Izquierdo \cite{BCIrac} for genus four and five. 

With the advent of significant advancements in computer algebra systems, lists of groups of automorphisms for small genera are available; see, for instance \cite{conder}, \cite{K} and \cite{jen}. 

\s

Kuribayashi and Komiya in \cite{KK77} introduced the pencil of quartics $\mathcal{F}_2$ given by the algebraic curves $$x^{4} + y^{4} + z^{4} + t(x^2 y^2 + x^2 z^2 + y^2 z^2) = 0, \mbox{ where } t \in \bar{\mathbb{C}}.$$

Up to four special curves that are singular, the members this pencil are non-hyperelliptic Riemann surfaces of genus three and have a group of automorphisms isomorphic to the symmetric group of order twenty-four, generated by the transformations $$[x:y:z]\mapsto [x:z:-y] \,\, \mbox{ and } \,\, [x:y:z]\mapsto [z:y:x].$$

This group agrees with the full automorphism group of each smooth member of $\mathcal{F}_2$ with only two celebrated exceptions: the  Fermat quartic and  the Klein  quartic $$x^4 + y^4 + z^4 =0 \,\, \mbox{ and } \,\,x^3y+y^3z+z^3x=0,$$which are obtained by taking $t=0$ and $t=\tfrac{3}{2}(-1\pm 7\sqrt{-1})$ respectively.  

\s

This pencil (which is also known in the literature as the KFT family) and distinguished members of it have beautiful properties and have been extensively studied from different points of view (such as its Fuchsian uniformisation, the behaviour of its limits points, the Weierstrass points, the field of moduli, the moduli of the intermediate quotients, the corresponding Jacobian varieties, maximal tangency lines, among others aspects). See, for instance, \cite{CGA17}, \cite{dolga}, \cite{GT02}, \cite{rubenkft}, \cite{ik}, \cite{KS79}, \cite{MZ},  \cite{RCS}, \cite{KFT} and \cite{W1895}.

\s

Let $p \geqslant 3$ be a prime number. This article is aimed at  studying a natural generalisation this pencil. Concretely, we study the {\it{generalised Kuribayashi-Komiya pencil of degree $2p$}} defined as the algebraic curves in $\mathbb{P}^2$ given by
$$x^{2p} + y^{2p} + z^{2p} + t(x^p y^p + x^p z^p + y^p z^p) = 0 \mbox{ where }t \in \bar{\mathbb{C}}.$$ We denote this pencil by $\mathcal{F}_p$ and the subset formed by its smooth members by $\mathcal{F}_p^*$.

\s

The results of this paper, which will be stated in \S\ref{state}, can be succinctly summarised as follows. We study the full automorphism group $G$ of the Riemann surfaces in $\FP^*$ and prove that, generically, this group is generated by the transformations
$$\left(\begin{smallmatrix}
\zeta_p&0&0\\
0&1&0\\
0&0&1\\
\end{smallmatrix}\right), \,\, \left(\begin{smallmatrix}
1&0&0\\
0&\zeta_p&0\\
0&0&1\\
\end{smallmatrix}\right), \,\, \left(\begin{smallmatrix}
0&1&0\\
0&0&1\\
1&0&0\\
\end{smallmatrix}\right), \,\, \left(\begin{smallmatrix}
0&0&1\\
0&1&0\\
1&0&0\\
\end{smallmatrix}\right) \,\, \mbox{ where }\zeta_p=\mbox{exp}(2 \pi i / p),$$and is isomorphic to the semidirect product \begin{equation}\label{pg8}\G=\langle a, b, s, r :  a^{p},b^{p},r^3,s^2,(sr)^2,[a,b], rar^{-1}ab, rbr^{-1}a^{-1}, sasab, [s,b]\rangle.\end{equation}
We then introduce a {\it family of Riemann surfaces} (see \S\ref{prelis} for a precise definition of {\it family}) in the singular locus of the moduli space that contains $\FP^*$,  and employ the combinatorial theory of Fuchsian groups to derive further properties of the members of $\FP^*,$ such as the description of the action of $G$ and of its subgroups. In particular, we deduce that $\mathcal{F}_p^*$ does not intersect the hyperelliptic locus. After that, we study the {\it exceptional members} of $\FP^*,$ namely, those members of the pencil that admit more automorphisms than the generic member. We prove that there is only one such an exceptional member: the Fermat curve  $$x^{2p}+y^{2p}+z^{2p}=0.$$ We also observe that no member of $\FP^*$ generalises the role played by the Klein quartic in $\mathcal{F}_2.$ Finally, as an application of our results, we provide a classification of all compact Riemann surfaces --or, algebraic curves-- of genus $(p-1)(2p-1)$ that are endowed with a group of automorphisms isomorphic to the semidirect product $\G$ with presentation \eqref{pg8}. This last result contributes to the broader program of classifying group actions on Riemann surfaces.
\s

It is worth mentioning that different generalisations of $\mathcal{F}_2$ can be found in the literature. See, for instance,  \cite{SE14} and \cite{H00}.

\s

This article is organised as follows. In \S\ref{prelis} we briefly review the basic background and introduce some notation. The statement of the results is given in \S\ref{state}, and the proofs of them are given in  \S\ref{dems}. We end the article by pointing out two remarks in \S\ref{rems}.

\section{Preliminaries and Notations}\label{prelis}

Let $X$ be a Riemann surface of genus $g \geqslant 2$ endowed with an action $\psi: G \to \mbox{Aut}(X)$ of a finite group $G.$ Let $\pi_G : X \to X/G$ be the regular covering map given by the action of $G \cong \psi(G)$ on $X$, and assume that $\pi_G$ ramifies over $r$ values $y_1, \ldots, y_r$. The {\it signature} of the action of $G$ is  $$\sigma=(h; m_1, \ldots, m_r) \in \mathbb{Z}^+_0 \times (\mathbb{Z}_{\geqslant 2})^r$$where $h$ is the genus of $X/G$ and  $m_i$ is the order of the $G$-stabiliser of $x_i \in X$ where $\pi_G(x_i)=y_i$.

Let $\Gamma$ be a Fuchsian group such that $X \cong \mathbb{H}/\Gamma.$ Riemann existence theorem ensures that $G$ acts on $X$ if and only if there is a Fuchsian group $\Delta$ and there is an epimorphism  \begin{equation}\label{enc}\theta : \Delta \to G \mbox{ such that } \mbox{ker}(\theta)=\Gamma.\end{equation} The genus $g$ of $X$ is related to $\sigma$ by the {\it Riemann-Hurwitz formula} $$2(g-1)=|G|(2(h-1)+\Sigma_{j=1}^r(1-1/m_j)).$$

The epimorphism $\theta$ represents the action and is called a {\it surface-kernel epimorphism}. It is said that the signature of $\Delta$ is $\sigma$. It is well known that $\Delta$ has a canonical presentation   $$\langle \alpha_1, \beta_1, \ldots, \alpha_h, \beta_h, x_1, \ldots, x_r : \Pi_{i=1}^h\alpha_i\beta_i\alpha_i^{-1}\beta_i^{-1}\Pi_{j=1}^rx_j=x_1^{m_1}=\cdots =x_r^{m_r}=1\rangle.$$The elements of finite order of $\Delta$ are those that are conjugate to powers of $x_i.$ For simplicity, when the Fuchsian group is clear enough, we identify $\theta$ as in \eqref{enc} with the {\it generating vector} $$\theta=(\theta(\alpha_1), \ldots, \theta(\alpha_h), \theta(\beta_1), \ldots, \theta(\beta_h), \theta(x_1), \ldots, \theta(x_r)) \in G^{2h+l}.$$

Let $G'$ be a supergroup of $G.$ The action of $G$ on $X$ is said to {\it extend} to an action of $G'$ if there is a Fuchsian group $\Delta'$ with $\Delta \leqslant \Delta'$, both having the same Teichm\"{u}ller dimension, and there is a surface-kernel  epimorphism $$\Theta: \Delta' \to G' \, \mbox{ with }\,   \Theta|_{\Delta}=\theta.$$

An action is called {\it maximal} if it cannot be extended in the previous sense. The list of pairs of signatures that may allow having an extension as before was found by Singerman in  \cite{Sing72}. In addition, for each such a pair, Bujalance, Cirre and Conder in  \cite{BCC} provided necessary and sufficient conditions under which such an extension is realised; this condition is given in terms of the surface-kernel epimorphism representing the action. See also \cite{Ries93}.  

\s

Let $Y$ be a  Riemann surface endowed with an action $\psi':G \to \mbox{Aut}(Y)$. If there exists an orientation-preserving homeomorphism  \begin{equation} \label{lapiz22}\phi: X \to Y \mbox{ such that } \phi \psi(G) \phi^{-1}=\psi'(G)\end{equation}then it is said that the actions $\psi$ and $\psi'$ are {\it topologically equivalent}. This relation can be read off from surface-kernel epimorphisms as follows. If we write $X/{G} \cong \mathbb{H}/{\Delta}$ and $Y/G \cong \mathbb{H}/{\Delta}'$ then each homeomorphism $\phi$ as in \eqref{lapiz22}  induces  a group isomorphism $\phi^*: \Delta \to \Delta'$. Hence, we may assume $\Delta=\Delta'$. We denote the subgroup of $\mbox{Aut}(\Delta)$ consisting of the automorphisms $\phi^*$ by $\mathscr{B}$. Following \cite{B90}, the surface-kernel epimorphisms $\theta : \Delta \to G$ and $ \theta': \Delta \to G$ representing two actions of $G$  are topologically equivalent (denoted by $\theta \sim \theta'$) if and only if $$\mbox{there exist } a \in \mbox{Aut}(G) \mbox{ and } \phi^* \in \mathscr{B} \mbox{ such that } \theta' = a\circ\theta \circ \phi^*.$$

A {\it family of Riemann surfaces} of genus $g$ is the locus $$\mathcal{F}=\mathcal{F}(G, \sigma) \subset \mbox{Sing}(\mathscr{M}_g)$$ formed by the points representing isomorphism classes of Riemann surfaces endowed with an action of $G$ with signature $\sigma$. According to \cite{B90}, the interior of the family $\mathcal{F}$, if non-empty, consists of those Riemann surfaces whose (full) automorphism group is isomorphic to $G$ and is formed by finitely many components or {\it equisymmetric strata}, each one of them being a closed irreducible (non necessarily smooth) subvariety of  $\mathscr{M}_g$ (see also \cite{GG92}).

\section{Statement of the results}\label{state}

Let $p \geqslant 2$ be a prime number. We denote by $X_{p,t}$ the member of the pencil $\FP$ given by 
$$x^{2p}+y^{2p}+z^{2p}+t(x^p y^p +x^p z^p +y^p z^p)=0 \mbox{ where } t \in \bar{\mathbb{C}}.$$

A routine computation shows that $X_{p,t}$ is smooth if and only if $t \in \bar{\mathbb{C}}-\{\infty, -1, \pm 2\},$ and therefore in such cases the curve $X_{p,t}$ has a structure of compact   Riemann surface of genus $(p-1)(2p-1).$ The four singular members of $\FP$ will be briefly discussed in \S\ref{singular}.

\s

We recall that $\FP^*$ stands for the subset of $\FP$ formed by its smooth members.

\begin{prop} \label{noprimo1}
Let $p \geqslant 2$ be a prime number and let $X_{p,t} \in \FP^*.$ The following statements hold. 

\begin{enumerate}
\item If $\zeta_p=\mbox{exp}(2 \pi i/p)$ then the automorphisms of $\mathbb{P}^2$ given by $$\bf{a}=\left(\begin{smallmatrix}
\zeta_p&0&0\\
0&1&0\\
0&0&1\\
\end{smallmatrix}\right), \,\, \bf{b}=\left(\begin{smallmatrix}
1&0&0\\
0&\zeta_p&0\\
0&0&1\\
\end{smallmatrix}\right), \,\, \bf{r}=\left(\begin{smallmatrix}
0&1&0\\
0&0&1\\
1&0&0\\
\end{smallmatrix}\right), \,\, \bf{s}=\left(\begin{smallmatrix}
0&0&1\\
0&1&0\\
1&0&0\\
\end{smallmatrix}\right)$$restrict to $X_{p,t}$ and generate a group of automorphisms  $G_p$ of it isomorphic to a semidirect product $\G$ with presentation
\begin{equation}\label{pg}\langle a, b, s, r :  a^{p},b^{p},r^3,s^2,(sr)^2,[a,b], rar^{-1}ab, rbr^{-1}a^{-1}, sasab, [s,b]\rangle.\end{equation}
\item The signature of the action of $N_p = \langle \mathbf{a,b}\rangle \cong \mathbb{Z}_p^2$ and of $G_p$ on $X_{p,t}$ is $(0; p, \stackrel{6}{\ldots}, p)$ and $(0;2,2,3,p)$.

\s

\item If $p \neq 3$ then $\mbox{Aut}(X_{p,t})=G_p$, up to finitely many exceptional values of $t$.
\end{enumerate}
\end{prop}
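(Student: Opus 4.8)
The plan is to show that the action of $G_p$ on $X_{p,t}$ is generically maximal, i.e.\ it cannot be extended to an action of a strictly larger group on the same surface. Since we already know from part (2) that $G_p$ acts with signature $(0;2,2,3,p)$, the strategy is to invoke Singerman's list \cite{Sing72} of signature pairs $(\sigma, \sigma')$ that can allow a Fuchsian inclusion $\Delta \leqslant \Delta'$ of equal Teichm\"uller dimension, together with the refinement of Bujalance--Cirre--Conder \cite{BCC}. First I would look up which signatures $\sigma'$ can appear above $\sigma = (0;2,2,3,p)$ with $\Delta$ of finite index in $\Delta'$; because the quadrilateral group $\Delta$ with signature $(0;2,2,3,p)$ has small area, only finitely many candidate pairs $(\sigma,\sigma')$ occur, and for a prime $p \geqslant 5$ the arithmetic constraints (the periods of $\sigma'$ must be compatible with those of $\sigma$ under the allowed index) rule out all but a short explicit list — typically $\sigma' = (0;2,3,2p)$ or $(0;2,3,3,p)$-type entries of index $2$, or small-index inclusions into triangle groups. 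For each surviving candidate I would then write down the supergroup $G_p'$ that would have to act (its order is $|G_p|$ times the index) and check, using the necessary-and-sufficient condition of \cite{BCC} expressed in terms of the surface-kernel epimorphism $\theta: \Delta \to G_p$ representing our action, whether such an extension is actually realised.

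The key group-theoretic input is the structure of $G_p = \mathbb{Z}_p^2 \rtimes \mathbf{D}_3$ established in part (1): its unique Sylow $p$-subgroup is $N_p \cong \mathbb{Z}_p^2$ (normal, since $p \nmid |\mathbf{D}_3| = 6$ for $p \geqslant 5$), and the quotient is $\mathbf{D}_3 \cong S_3$. I would argue that any group $G_p'$ containing $G_p$ as a subgroup of index $2$ or $3$ and acting on $X_{p,t}$ with the appropriate signature must still have $N_p$ as a characteristic-in-$G_p$, hence normal-in-$G_p'$, Sylow $p$-subgroup; this pins down $G_p' / N_p$ as a small extension of $\mathbf{D}_3$, which forces the conjugation action of $G_p'/N_p$ on $N_p \cong \mathbb{F}_p^2$ into one of finitely many conjugacy classes of subgroups of $\mathrm{GL}_2(\mathbb{F}_p)$. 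Combining this with the projective-linear constraint — that $G_p'$ must act on $X_{p,t} \subset \mathbb{P}^2$ by automorphisms of the ambient $\mathbb{P}^2$ preserving the (unique, for generic $t$) canonical embedding, and hence lie in $\mathrm{PGL}_3(\mathbb{C})$ — I would conclude that no such $G_p'$ can exist unless $t$ takes one of finitely many special values where the geometry degenerates (e.g.\ the Fermat locus). That last step uses that for generic $t$ the curve $X_{p,t}$ is non-hyperelliptic (proved earlier) so $\mathrm{Aut}(X_{p,t})$ embeds in $\mathrm{PGL}_3(\mathbb{C})$ via the canonical map, turning the question into one about linear automorphisms of the defining equation.

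Concretely the steps are: (i) recall $\sigma = (0;2,2,3,p)$ and enumerate, via \cite{Sing72}, the finitely many possible overgroup signatures $\sigma'$, discarding those incompatible with $p$ prime; (ii) for each, determine the abstract isomorphism type(s) of the putative overgroup $G_p'$ using that $N_p$ remains the normal Sylow $p$-subgroup and analysing the action on $\mathbb{F}_p^2$; (iii) apply the \cite{BCC} criterion to the explicit generating vector of $\theta$ to test realisability of the extension; (iv) for the finitely many abstract types that survive (ii)--(iii), translate back to $\mathrm{PGL}_3(\mathbb{C})$ and show the extra automorphism cannot normalise the pencil's defining form for generic $t$, isolating the finite exceptional set. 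I expect step (ii)--(iv) interplay to be the main obstacle: one must be careful that an extension abstractly permitted by Singerman/BCC is genuinely excluded geometrically, and tracking exactly which finitely many values of $t$ must be set aside (as opposed to merely asserting "generic") requires pinning down the special loci — principally $t=0$, the Fermat curve, which is handled separately in the discussion of exceptional members. The hyperelliptic exclusion and the linearisation of $\mathrm{Aut}$ are what make this tractable, and the restriction $p \neq 3$ enters precisely because for $p=3$ the Sylow $p$-subgroup no longer has order coprime to $|\mathbf{D}_3|$, breaking the normality argument in step (ii).
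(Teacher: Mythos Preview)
Your strategy is far more elaborate than what is needed, and it obscures the one observation that makes the result immediate. The paper's proof is essentially a single dimension count: if $G_p \lneq \mbox{Aut}(X_{p,t})$ then the inclusion of Fuchsian groups $\Gamma < \Gamma'$ forces $\dim\Gamma' \in \{0,1\}$, and Singerman's list \cite{Sing72} shows that for $p\geqslant 5$ the signature $(0;2,2,3,p)$ is \emph{maximal among one--dimensional signatures} --- no Fuchsian group of Teichm\"uller dimension one properly contains it. Hence $\Gamma'$ is a triangle group, the corresponding locus in moduli is zero--dimensional, and only finitely many values of $t$ can land there. That is the entire argument; no enumeration of candidate overgroups, no Sylow analysis of $G_p'$, no BCC criterion, and no $\mathrm{PGL}_3$ computation is required.

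Your steps (ii)--(iv) are not wrong in spirit, but they belong to the proof of the much sharper Theorem~\ref{fullg} (and Propositions~\ref{con3} and~\ref{prol2}), which actually pin down the exceptional members rather than merely asserting finiteness. For Proposition~\ref{noprimo1}(3) they are superfluous. Note also that your sample one--dimensional overgroup signature $(0;2,3,3,p)$ cannot occur: its hyperbolic area exceeds that of $(0;2,2,3,p)$, so it could never sit above it. The real content of step~(i), done correctly, is precisely the paper's observation that Singerman's list yields no one--dimensional $\sigma'$ at all --- at which point the proof is finished. Finally, the appeal to non-hyperellipticity ``proved earlier'' is circular in the paper's logical order (Corollary~\ref{pepsi} comes after Proposition~\ref{noprimo1}), though it is harmless here since smooth plane curves of degree $\geqslant 4$ are automatically non-hyperelliptic; in any case the paper's argument does not use this.
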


Along the paper, we shall write $G_p=\langle\mathbf{r,s,a,b} \rangle \leqslant \mbox{Aut}(X_{p,t}),$ and denote by $\G$ the semidirect product with presentation \eqref{pg}.

\s

Let $\GP$ be the family formed by the compact Riemann surfaces endowed with a group of automorphisms isomorphic to $G=\G$ acting with signature $(0; 2,2,3,p).$ Note that $$\GP \subset \mbox{Sing}(\mathscr{M}_{(p-1)(2p-1)})$$has complex dimension one. Clearly, by definition, we have that $\FP^* \subset \GP$. Let $X \in \GP$ and let $$\Gamma=\langle x_1, x_2, x_3, x_4 : x_1^2=x_2^2=x_3^3 = x_4^p= x_1x_2x_3x_4 =1\rangle$$be a Fuchsian group of signature $(0;2,2,3,p)$. As mentioned in the introduction, Riemann's existence theorem coupled with Proposition \ref{noprimo1} imply that there exists a group epimorphism 
 \begin{equation}\label{lapiz}\Theta: \Gamma \to G=\G\end{equation} such that $\mathbb{H}/\mbox{ker}(\Theta) \cong X$  and $\mathbb{H}/\Gamma \cong X/G$.
 
 \s
 
 It is well-known that the family $\mathscr{C}_2$ consists of only one equisymmetric stratum (see, for instance, \cite[Table 5]{B91}), whereas the family $\mathscr{C}_3$ consists precisely of two equisymmetric strata (as can be obtained by using the routines implemented in  \cite{BRR13}). As the automorphism groups of compact Riemann surfaces of genus ten are known, from now on we assume $p \geqslant 5.$

\begin{prop}\label{thetak}
Let $p \geqslant 5$ be a prime number. The action of $G=\G$ (with presentation \eqref{pg}) on $X \in \GP$  is represented by the surface-kernel epimorphism $$\theta_k : \Gamma \to G \, \mbox{ given by } \, \theta_k= (s, sr, a^k b^{k-1}r^2, ab^k)$$for some $k \in \{1, \ldots, p-2\}.$ Furthermore:
\begin{enumerate} 
\item if $p\equiv 1\mbox{ mod }3$ then $\mathscr{C}_p$ consists of at most $\frac{p+1}{2}$  equisymmetric strata, and
\item if $p\equiv 2\mbox{ mod }3$  then $\mathscr{C}_p$ consists of at most $\frac{p-1}{2}$ equisymmetric strata.
\end{enumerate} 
\end{prop}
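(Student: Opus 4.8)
I would prove Proposition~\ref{thetak} by a direct analysis of the surface-kernel epimorphisms $\theta\colon\Gamma\to G$ together with the action of $\mathrm{Aut}(G)$ and of the group $\mathscr{B}\leqslant\mathrm{Aut}(\Gamma)$ on them. The signature $(0;2,2,3,p)$ is rigid (Teichm\"uller dimension one), and for such signatures $\mathscr{B}$ is finite and is generated by the braid-type moves on the periods $x_1,x_2,x_3,x_4$ compatible with the orders $(2,2,3,p)$; since the period of order $p$ is isolated among the periods (the others have orders $2,2,3$, all distinct from $p$ and from each other except the two $2$'s), the relevant braid moves permute only $x_1$ and $x_2$, up to conjugation. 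So the first task is to pin down $\mathscr{B}$ explicitly from Singerman's list / the description in \cite{B90,BCC}.

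\medskip
\noindent\textbf{Step 1: normal form for $\theta$.} Write $\theta=(c_1,c_2,c_3,c_4)$ with $c_1^2=c_2^2=c_3^3=c_4^p=1$, $c_1c_2c_3c_4=1$, and $\langle c_1,c_2,c_3,c_4\rangle=G$. Using the presentation \eqref{pg} I would first identify the conjugacy classes of elements of orders $2,3,p$ in $G=\mathbb{Z}_p^2\rtimes\mathbf{D}_3$: the elements of order $p$ lie in $N_p=\langle a,b\rangle$ (all nontrivial elements of $N_p$ have order $p$ since $p$ is prime), the elements of order $2$ are the reflections (conjugates of $s$, $sr$-type elements), and the elements of order $3$ project to $r^{\pm1}$ in $\mathbf{D}_3$. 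Because $N_p\rtimes\langle r\rangle$ has no elements of order $6$ (as $r$ acts on $N_p$ without nonzero fixed vectors, by the relations $rar^{-1}=b^{-1}$, $rbr^{-1}=ab^{-1}$ — one checks the $2\times2$ matrix of $r$ on $\mathbb{F}_p^2$ has order $3$ and no eigenvalue $1$), every order-$3$ element has trivial $N_p$-component up to conjugacy, so after applying an inner automorphism we may take $c_3\in\{r,r^2\}$, and after composing with the outer/inner automorphism swapping $r\leftrightarrow r^2$ if necessary, $c_3=r^2$. Similarly $c_1,c_2$ are reflections; conjugating, $c_1=s$. Then $c_2=(c_1c_3)^{-1}c_4^{-1}=(sr^2)^{-1}c_4^{-1}$ must be an involution, and $c_4\in N_p\setminus\{1\}$ is forced by surjectivity to be a generator-completing vector. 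Solving $c_2^2=1$ with $c_4=a^ub^v$ gives a one-parameter family; a short computation using the relations $sasab=1$ (i.e. $sas^{-1}=b^{-1}$... adjust signs), $[s,b]=1$ will reduce the solution set to $c_4=ab^k$ with $c_3$-component twist $a^kb^{k-1}r^2$, i.e. exactly $\theta_k=(s,sr,a^kb^{k-1}r^2,ab^k)$, for $k$ ranging over residues mod $p$; surjectivity fails precisely at the boundary values, leaving $k\in\{1,\dots,p-2\}$. This is the computational heart and the step I expect to be the main obstacle: carrying out the involution and generation conditions in $\mathbb{Z}_p^2\rtimes\mathbf{D}_3$ cleanly and confirming the stated normal form with the correct range of $k$.

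\medskip
\noindent\textbf{Step 2: the $\mathrm{Aut}(G)$- and $\mathscr{B}$-action on $\{\theta_k\}$.} Next I would compute how $\mathrm{Aut}(G)$ and the braid moves act on the parameter $k$. Automorphisms of $G$ restrict to automorphisms of the characteristic subgroup $N_p\cong\mathbb{F}_p^2$ commuting with the $\mathbf{D}_3$-action; since $\mathbf{D}_3$ acts irreducibly on $\mathbb{F}_p^2$ (no fixed vectors, as above), Schur's lemma over $\mathbb{F}_p$ forces these to be scalars $\lambda\in\mathbb{F}_p^\times$ — but scalars must also be compatible with the order-$2$ element $s$, whose action has eigenvalues $\pm1$, and only $\lambda=\pm1$ survives; combined with the outer automorphism inverting $r$ (which conjugates the whole picture) and inner automorphisms, the net effect on $\theta_k$ is a dihedral-type action on $k\in\mathbb{Z}/p$, $k\mapsto \pm k$ and $k\mapsto \pm k+(\text{shift})$. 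The braid move transposing $x_1\leftrightarrow x_2$ sends $(c_1,c_2,\dots)$ to $(c_2,c_2^{-1}c_1c_2,\dots)$, i.e. conjugates things around and, after renormalising to the form of Step~1, sends $\theta_k$ to $\theta_{k'}$ for an explicit $k'$; I expect $k'\equiv 1-k$ or $k'\equiv -k$ modulo $p$. Putting the $\mathrm{Aut}(G)$-orbits and the $\mathscr{B}$-orbits together, the orbits on $\{1,\dots,p-2\}$ under the combined group $\langle k\mapsto -k,\ k\mapsto 1-k,\ k\mapsto k^{-1}?\rangle$ — I need to determine exactly which involutions arise — have size at most $2$ (generically), giving roughly $(p-2)/2$ orbits; the case split $p\equiv1$ vs $p\equiv2\pmod3$ arises because when $3\mid p-1$ there is an extra cube-root-of-unity scalar in $\mathbb{F}_p^\times$ commuting with the $\mathbf{D}_3$-action (it generates the center of the image of $\mathbf{D}_3$ in $\mathrm{GL}_2(\mathbb{F}_p)$, or equivalently $\mathbb{Z}_p^2$ admits an extra automorphism normalising the action), which does \emph{not} occur when $3\mid p-2$; this extra symmetry merges orbits in pairs, improving the count from $\tfrac{p-1}{2}$-ish to $\tfrac{p+1}{2}$... one must be careful here with the fixed points of the involutions (e.g. $k=1-k$ forces $2k=1$, a single value) which is why "at most" appears in the statement.

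\medskip
\noindent\textbf{Step 3: bookkeeping and the bound.} Finally I would assemble the count. The number of equisymmetric strata in $\mathscr{C}_p$ equals the number of topological equivalence classes of actions of $G$ with signature $(0;2,2,3,p)$, which by the criterion $\theta\sim\theta'\iff\theta'=a\circ\theta\circ\phi^*$ (recalled in the excerpt, following \cite{B90}) is the number of orbits of $\{\theta_k\}$ under $\mathrm{Aut}(G)\times\mathscr{B}$. Combining Steps~1 and~2: the parameters run over $\{1,\dots,p-2\}$, a set of size $p-2$; the acting group contains at least an involution (without fixed points in range when $p\equiv2\pmod3$, so $\lfloor(p-2)/2\rfloor=\tfrac{p-1}{2}$ orbits, assuming $p$ odd makes $p-2$ odd... so in fact $(p-3)/2$ free orbits plus possibly one fixed, total $\tfrac{p-1}{2}$), and an extra involution when $p\equiv1\pmod3$ cutting this roughly in half but with fixed points adding back, yielding the bound $\tfrac{p+1}{2}$ — the asymmetry $\tfrac{p+1}{2}$ vs $\tfrac{p-1}{2}$ being exactly the contribution of these fixed points and the parity of $p$ modulo $3$ and $2$. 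Since I only claim an upper bound (some of these strata might coincide if $\mathrm{Aut}(G)\times\mathscr{B}$ is larger than the subgroup I exhibit, or differ if it is smaller — but "at most" is safe once I show the orbit count is \emph{at least} that many preimages are accounted for), I do not need the orbit structure to be exact, only that $\{\theta_k : 1\le k\le p-2\}$ exhausts all actions (Step~1) and that the exhibited symmetries are genuine (Step~2). The main obstacle remains Step~1's explicit computation in $\mathbb{Z}_p^2\rtimes\mathbf{D}_3$; Steps~2 and~3 are then finite, essentially elementary number theory modulo $p$.
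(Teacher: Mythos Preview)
Your overall strategy coincides with the paper's---normalise $\theta$ using conjugation and scaling automorphisms, then count orbits of the normal forms under the residual action of $\mathrm{Aut}(G)\times\mathscr{B}$---but several concrete claims in Steps~1--3 would derail the computation. A minor point first: in Step~1 you cannot simultaneously conjugate to $c_1=s$ and $c_3=r^2$; once $c_1$ is pinned down, the remaining centraliser is too small to kill the $N_p$-part of $c_3$ (your own sketch tacitly acknowledges this when you later speak of a ``$c_3$-component twist $a^kb^{k-1}r^2$''). The paper instead fixes $(c_1,c_2)=(s,sr)$, leaves $c_3=a^lb^mr^2$ free, and then uses the full group of scaling automorphisms $a\mapsto a^e,\,b\mapsto b^e$ with $e\in(\mathbb{Z}/p)^\times$---not just $\pm1$: scalars commute with the whole $\mathbf{D}_3$-action, so your ``compatibility with $s$'' restriction is spurious---to reach the one-parameter family $\theta_k$.

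The substantive gaps are in Step~2. You assert that ``the relevant braid moves permute only $x_1$ and $x_2$'', but this misses the pure-braid elements at the other positions: although $x_3,x_4$ have distinct orders, the square $\Phi_3\circ\Phi_3$ lies in $\mathscr{B}$, and it is precisely this move (combined with the inversion $a\mapsto a^{-1},\,b\mapsto b^{-1}$ and a rescaling) that the paper uses to obtain the key equivalence $\theta_k\sim\theta_{(1-k)^{-1}}$. The resulting map $f(k)=(1-k)^{-1}$ is a M\"obius transformation of order three, not an involution of the type $k\mapsto 1-k$ or $k\mapsto -k$ that you conjecture. The $p\bmod 3$ dichotomy then comes from the fixed points of $f$: the equation $k^2-k+1=0$ has roots in $\mathbb{Z}/p$ exactly when $-3$ is a square, i.e.\ when $p\equiv1\pmod3$, yielding two singleton orbits and hence a \emph{larger} upper bound $(p+1)/2$ in that case. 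Your proposed mechanism---an extra cube-root-of-unity scalar when $3\mid p-1$---points the wrong way (extra symmetry would mean fewer strata, not more) and is in any event inert, since the scaling automorphisms have already been fully consumed in reaching the normal form $\theta_k$ and act trivially on the index $k$.
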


We denote by $\GP^k$ the equisymmetric stratum of $\GP$ determined by $\theta_k$.

\begin{prop} \label{fanta}
Let $p\geqslant 5$ be a prime number and let $X \in \GP^k$ where $k \in \{1, \ldots, p-2\}.$ For each subgroup $H$ of $G$, the genus $g_H$ of $X/H$ and the ramification data of the regular covering map $X \to X/H$ is summarised in the following table.

\s
{\tiny
\begin{center}
\begin{tabular}{|c | c |c |c|} 
 \hline
 $H$ & $\mbox{ order of }H$ & $g_H$ & $\mbox{ramification data}$ \\ 
 \hline\hline
   $\langle ab^l \rangle$ & $p$ & $2(p-1)$ & $(-)$ if $k\neq l$ \\
  &  & $p-1$ & $(p, \stackrel{2p}{\ldots}, p)$ if $k=l$ \\  [0.3ex]
 \hline
  $\langle s \rangle$ & $2$ & $(p-1)^2$ & $(2,\stackrel{2p}{\ldots},2)$ \\  [0.3ex]
 \hline
  $\langle b, s \rangle$ & $2p$ & $p-1$ & $(2, 2)$ if $k \neq 1$ \\ 
 &  & $\tfrac{p-1}{2}$ & $(2,2, p, \stackrel{p}{\ldots}, p)$ if $k = 1$ \\ [0.3ex]
 \hline
  $\langle a, b \rangle$ & $p^2$ & $0$ & $(p, \stackrel{6}{\ldots}, p)$ \\ [0.3ex]
 \hline
  $\langle r \rangle$ & $3$ & $\frac{(2p-1)(p-1)}{3}$ & $(3,3)$ \\ [0.3ex]
 \hline
  $\langle s, r \rangle$ & $6$ & $\frac{(p-1)(p-2)}{3}$ & $(2, \stackrel{2p}{\ldots},2, 3)$ \\ [0.3ex]
 \hline
  $\langle as \rangle$ & $2p$ & $p-1$ & $(2,2)$ if $k \neq 1$ \\ 

&  & $\frac{p-1}{2}$ & $(2,2,p, \stackrel{p}{\ldots}, p)$ if $k = 1$ \\ [0.3ex]
 \hline
 $\langle a,b \rangle \rtimes \langle r\rangle$ & $3p^2$ & $0$ & $(3,3,p,p)$ \\ [0.3ex]
 \hline
  $\langle a,b \rangle \rtimes \langle s\rangle$ & $2p^2$ & $0$ & $(2,2,p,p,p)$ \\ 
 \hline
\end{tabular}
\end{center}}
\end{prop}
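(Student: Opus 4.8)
The plan is to compute, for each subgroup $H$ listed in the table, the genus $g_H$ and the ramification data of $\pi_H\colon X\to X/H$ directly from the surface-kernel epimorphism $\theta_k=(s,sr,a^kb^{k-1}r^2,ab^k)$ of Proposition \ref{thetak}. The key tool is the standard branching formula: if $X\cong\mathbb H/\Lambda$ with $\Lambda=\ker\Theta$ for $\Theta\colon\Gamma\to G$, then the branch points of $X\to X/H$ over the $i$-th branch value of $X/G\to$ (point) correspond to the double cosets $H\backslash G/\langle x_i\rangle$ (equivalently, to the orbits of $H$ acting by left multiplication on $G/\langle x_i\rangle$), and a double coset $Hg\langle\Theta(x_i)\rangle$ contributes a branch point of order $|\langle\Theta(x_i)\rangle|/|\langle\Theta(x_i)\rangle\cap g^{-1}Hg|$, i.e. of order equal to $m_i$ divided by the size of the stabiliser. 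Summing $|G|/|H|$ minus the number of such orbits, appropriately weighted, and feeding the result into Riemann--Hurwitz gives $g_H$. So the whole proposition reduces to a sequence of double-coset enumerations in the explicit group $\G=\mathbb Z_p^2\rtimes\mathbf D_3$.

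First I would set up coordinates on $\G$: write every element uniquely as $a^ib^j w$ with $w\in\mathbf D_3=\langle r,s\rangle$, and record the action of $\mathbf D_3$ on $N_p=\langle a,b\rangle\cong\mathbb Z_p^2$ coming from the relations $rar^{-1}=a^{-1}b^{-1}$ (so $rbr^{-1}=a$) and $sas^{-1}=a^{-1}b^{-1}$ (from $sasab=1$... more precisely $sas=ab^{-1}$? — I will fix the exact matrices from \eqref{pg}), $sbs^{-1}=b$. Concretely $N_p$ becomes the standard $2$-dimensional $\mathbb F_p[\mathbf D_3]$-module, and the elements $\Theta(x_3)=a^kb^{k-1}r^2$ (order $3$) and $\Theta(x_4)=ab^k$ (order $p$) are the ones whose cyclic groups I must intersect with conjugates of each $H$. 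Then, going down the table row by row: for cyclic $p$-groups $\langle ab^l\rangle$ the only branching comes from $x_4$ (since $r$- and $s$-generators have order coprime to $p$), and a conjugate $g^{-1}\langle ab^l\rangle g$ meets $\langle ab^k\rangle$ nontrivially iff $g$ conjugates the line $\mathbb F_p(1,l)$ into $\mathbb F_p(1,k)$; because $N_p$ is abelian this forces $k=l$ and then every point over $x_4$ ramifies — giving the two cases $(-)$ and $(p,\stackrel{2p}{\ldots},p)$. For $\langle s\rangle$, only $x_1,x_2$ (the involution generators) can ramify; since $\theta_k(x_1)=s$ and $\theta_k(x_2)=sr$ are both involutions, one counts $\mathbf D_3$-class fusion and the $p^2$-fold part of the cover to get $(2,\stackrel{2p}{\ldots},2)$; and so on for $\langle r\rangle$, $\langle b,s\rangle$, $\langle as\rangle$, $\langle s,r\rangle$, and the three $N_p$-containing subgroups $\langle a,b\rangle$, $\langle a,b\rangle\rtimes\langle r\rangle$, $\langle a,b\rangle\rtimes\langle s\rangle$ — for the last three the covers $X\to X/H$ factor through $X\to X/N_p$ (whose data $(0;p^6)$ is Proposition \ref{noprimo1}(2)) followed by a genus-zero quotient by $H/N_p$, which makes those rows essentially a Riemann--Hurwitz bookkeeping on $\mathbb P^1$. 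In each row, once the ramification data is in hand, $g_H$ is forced by $2g_H-2=\tfrac{|G|}{|H|}(2g_\Gamma-2)+\sum(\text{contributions})$ with $g_\Gamma=0$ and $|G|=6p^2$; I would simply verify the tabulated value.

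The main obstacle is the case split on $k$ that appears for $\langle b,s\rangle$, $\langle as\rangle$ (and implicitly $\langle ab^l\rangle$): one must see exactly when a conjugate of the order-$p$ element $ab^k$ lands inside $H$, i.e. when the $\mathbf D_3$-orbit of the line $\mathbb F_p(1,k)$ meets the line fixed/distinguished by $H$ — and this is where $k=1$ (equivalently the line $\mathbb F_p(1,1)$, which is special because $(1,1)$ is an eigenline for the relevant reflection in the $\mathbf D_3$-action, or because $ab$ is the "trace-zero-avoiding" vector) behaves differently from $k\ne1$. I expect that unwinding the $\mathbf D_3$-action on $\mathbb F_p^2$ explicitly — identifying which one-dimensional subspaces are fixed by $s$, by $sr$, and by $r^2$ — and then matching those against the conjugates of each $H$ is the delicate part; everything else is mechanical double-coset counting followed by Riemann--Hurwitz. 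Finally I would double-check consistency across rows using the inclusions among the $H$'s (e.g. $\langle b,s\rangle\supset\langle s\rangle$, $\langle a,b\rangle\rtimes\langle r\rangle\supset\langle a,b\rangle\supset\langle ab^l\rangle$), since the genera and ramification data of a tower must be compatible, which provides a strong internal sanity check on the whole table.
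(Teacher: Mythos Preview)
Your approach is essentially the same as the paper's: the authors also work directly from the surface-kernel epimorphism $\theta_k$, compute for each $H$ how the $G$-stabilisers over the four branch values intersect (conjugates of) $H$, and then read off the genus from Riemann--Hurwitz; they carry this out in detail only for $H=\langle as\rangle$ and declare the remaining rows analogous, citing Rojas for the intermediate-cover formalism that you phrase as double-coset counting. One small slip to tighten: in your $\langle ab^l\rangle$ paragraph the clause ``because $N_p$ is abelian this forces $k=l$'' ignores conjugation by the $\mathbf D_3$-part of $G$ (which does move lines in $\mathbb F_p^2$); you yourself invoke exactly this $\mathbf D_3$-orbit argument two paragraphs later, so just make the earlier sentence consistent with it.
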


\s

As a consequence of the proposition above, we obtain the following corollary.

\begin{coro} \label{pepsi} Let $p \geqslant 5$ be a prime number. If the automorphism group of $X \in \mathscr{C}_p$ is isomorphic to $\G$, then $X$ is not cyclic $n$-gonal for each $n \geqslant 2$. In particular, the generic member of $\mathscr{C}_p$ is non-hyperelliptic.
\end{coro}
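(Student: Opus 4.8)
The strategy is to argue by contradiction using the table in Proposition \ref{fanta}. Recall that $X$ is cyclic $n$-gonal (for $n\geqslant 2$) precisely when $\mathrm{Aut}(X)$ contains a cyclic subgroup $C$ of order $n$ such that the quotient $X/C$ has genus zero; the hyperelliptic case is $n=2$ with $X/C \cong \mathbb{P}^1$. So I would first enumerate, up to conjugacy, the cyclic subgroups of $G=\mathbb{Z}_p^2\rtimes \mathbf{D}_3$. Since $|G|=6p^2$ and $p\geqslant 5$, the possible orders of elements are among $1,2,3,p,2p,3p$ (there is no element of order $p^2$, as $N_p\cong\mathbb{Z}_p^2$ has exponent $p$, and no element of order $6$ since the $\mathbf{D}_3$-action forces the relevant products to have order $2$ or $3$ — this should be extracted from the presentation \eqref{pg}). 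Up to conjugacy the cyclic subgroups are represented by $\langle ab^l\rangle$ of order $p$, $\langle s\rangle$ of order $2$, $\langle r\rangle$ of order $3$, $\langle as\rangle$ and $\langle b,s\rangle$-type subgroups of order $2p$; a subgroup of order $3p$ would be cyclic only if it is $\langle r\rangle \times \langle ab^l\rangle$ normalised appropriately, and I would check from the relations whether such a cyclic subgroup exists — if it does it is conjugate into $\langle a,b\rangle\rtimes\langle r\rangle$ and the quotient genus is read off below.

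The second step is simply to read the genus column $g_H$ of the table in Proposition \ref{fanta} for each such cyclic $H$: for $\langle ab^l\rangle$ one gets $g_H=2(p-1)$ or $p-1$, for $\langle s\rangle$ one gets $(p-1)^2$, for $\langle r\rangle$ one gets $\tfrac{(2p-1)(p-1)}{3}$, for $\langle as\rangle$ and $\langle b,s\rangle$ one gets $p-1$ or $\tfrac{p-1}{2}$. Since $p\geqslant 5$, every one of these values is strictly positive — indeed $\tfrac{p-1}{2}\geqslant 2$. For a cyclic subgroup of order $3p$ (if one exists) its quotient maps onto the quotient by its order-$p$ subgroup, which already has positive genus, so that quotient genus is positive too; alternatively its genus can be computed from the $\langle a,b\rangle\rtimes\langle r\rangle$ row by a further $\mathbb{Z}_p$-quotient. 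Hence no cyclic subgroup of $G$ has a genus-zero quotient, so $X$ is not cyclic $n$-gonal for any $n\geqslant 2$; taking $n=2$ gives that $X$ is non-hyperelliptic. Finally, since by Proposition \ref{noprimo1}(3) the generic member of $\mathscr{C}_p$ has full automorphism group $G_p\cong\mathbb{Z}_p^2\rtimes\mathbf{D}_3$, the generic member is in particular non-hyperelliptic.

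The only real point requiring care — the \emph{main obstacle} — is making sure the list of conjugacy classes of cyclic subgroups of $G$ is genuinely complete, in particular ruling out (or correctly placing) cyclic subgroups of order $3p$, and confirming there is no element of order $6$ or $p^2$ in $G$. This is a finite group-theoretic check carried out directly from the presentation \eqref{pg}: the commutator relations $rar^{-1}=(ab)^{-1}$, $rbr^{-1}=a$, $sas=(ab)^{-1}$, $sbs=b$ pin down the $\mathbf{D}_3$-action on $N_p=\langle a,b\rangle$, and from there one checks which orders of elements occur and which cyclic subgroups are conjugate — every genus value one could land on appears in the table of Proposition \ref{fanta}, where the computation has already been done. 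Once this bookkeeping is in place, the corollary follows immediately from positivity of the listed genera for $p\geqslant 5$.
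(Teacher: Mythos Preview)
Your approach is exactly the one implicit in the paper: the corollary is stated there simply as an immediate consequence of the table in Proposition~\ref{fanta}, and you are spelling out precisely what that means---enumerate the cyclic subgroups of $G$ up to conjugacy, read the quotient genera from the table, and observe they are all strictly positive for $p\geqslant 5$.

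One small slip to fix. In your treatment of a hypothetical cyclic subgroup $C$ of order $3p$, you write that ``its quotient maps onto the quotient by its order-$p$ subgroup, which already has positive genus, so that quotient genus is positive too.'' The map goes the other way: if $C_p\leqslant C$ then $X/C_p\to X/C$, and the Riemann--Hurwitz inequality gives $g_{X/C}\leqslant g_{X/C_p}$, not the reverse. So positivity of $g_{X/C_p}$ does \emph{not} force positivity of $g_{X/C}$. This is moot, however, because (as you say you will check) $G$ contains no element of order $3p$: the conjugation action of $r$ on $N_p=\langle a,b\rangle\cong\mathbb{Z}_p^2$ has characteristic polynomial $\lambda^2+\lambda+1$, which has no root equal to $1$ when $p\neq 3$, so no nontrivial element of $N_p$ commutes with any element of order $3$. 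Once that is established, together with the absence of elements of order $6$ and $p^2$ (which you already flag), your list of cyclic subgroups is complete and the table finishes the argument.
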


\begin{rema}
As $\FP^* \subset \GP$, Proposition \ref{fanta} and Corollary \ref{pepsi} also hold for the  members of $\mathcal{F}_p^*.$
\end{rema}

We recall that  $\mathcal{F}_2$ has exactly two {\it exceptional members}, in the sense that they have more automorphisms than the general member: the Fermat quartic and the Klein quartic. It is clear that the Fermat curve $\mathbf{F}_{2p}$ 
 belongs to $\FP^*$ for each $p \geqslant 2$, and hence belongs to the family $\GP.$ 
 
 The following result locates $\mathcal{F}_p^*$ inside $\GP$ with the stratum $\GP^1.$

\begin{theo}\label{cig}
Let $p \geqslant 5$ be a prime number. Then $\mathbf{F}_{2p} \in \GP^1$ and $\mathcal{F}_p^* \subset \GP^1$.
\end{theo}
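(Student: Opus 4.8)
\textbf{Proof proposal for Theorem \ref{cig}.}

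The plan is to exhibit an explicit surface-kernel epimorphism realising the action of $G_p$ on each member $X_{p,t}\in\FP^*$ and on the Fermat curve $\mathbf{F}_{2p}$, and then identify which $\theta_k$ from Proposition \ref{thetak} it is topologically equivalent to. First I would fix a Fuchsian group $\Gamma$ of signature $(0;2,2,3,p)$ with the canonical presentation $\langle x_1,x_2,x_3,x_4 : x_1^2=x_2^2=x_3^3=x_4^p=x_1x_2x_3x_4=1\rangle$, and trace through the branch locus of the composite covering $X_{p,t}\to X_{p,t}/N_p\cong\mathbb{P}^1 \to X_{p,t}/G_p\cong\mathbb{P}^1$, using the six-punctured sphere description of the $N_p$-quotient already established in Proposition \ref{noprimo1}(2). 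The key is to pin down, in terms of the matrix generators $\mathbf{a},\mathbf{b},\mathbf{r},\mathbf{s}$, the images $\Theta(x_i)$: the involutions $x_1,x_2$ must map to conjugates of $\mathbf{s}$ and $\mathbf{sr}$ (the two classes of involutions, since $[s,b]=1$ forces $\mathbf{s}$ to have nontrivial centraliser in $N_p$), the order-$3$ generator $x_3$ to an element of the form $\mathbf{a}^i\mathbf{b}^j\mathbf{r}^2$, and $x_4$ to a generator $\mathbf{a}\mathbf{b}^m$ of a stabiliser subgroup $\langle ab^l\rangle$. Matching against the normalisation in Proposition \ref{thetak}, i.e.\ $\theta_k=(s,sr,a^kb^{k-1}r^2,ab^k)$, the claim will be that the relevant exponent is $k=1$, independently of $t$.

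The heart of the argument is the computation for the Fermat curve $\mathbf{F}_{2p}:x^{2p}+y^{2p}+z^{2p}=0$, which sits in $\FP^*$ as the member $t=0$. Here I would directly analyse the branch points of $\mathbf{F}_{2p}/N_p\to \mathbf{F}_{2p}/G_p$: the $G_p/N_p\cong\D_3$ action permutes the six branch values of the $N_p$-cover, and one reads off from the explicit action on coordinates (the automorphism $\mathbf{r}$ cyclically permutes $x,y,z$, while $\mathbf{s}$ swaps $x$ and $z$) exactly which power of $\mathbf{a}$ and $\mathbf{b}$ appears in the local monodromy around each fixed point. A short local computation at a point with, say, $z=0$ (where the stabiliser in $N_p$ is generated by the element acting as $\zeta_p$ on $x$ and trivially elsewhere) shows that the order-$p$ branch datum corresponds precisely to the generator $\mathbf{a}\mathbf{b}$, i.e.\ to $l=1$, and hence $\Theta\sim\theta_1$. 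Thus $\mathbf{F}_{2p}\in\GP^1$.

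For the inclusion $\FP^*\subset\GP^1$, the point is that the topological type of the $G_p$-action cannot jump within the connected (indeed irreducible) one-parameter family $\FP^*$: the branch data of $X_{p,t}\to X_{p,t}/G_p$ vary continuously in $t\in\bar{\mathbb{C}}-\{\infty,-1,\pm2\}$, and since the surface-kernel epimorphism type is a discrete invariant, it is locally constant on $\FP^*$; as $\FP^*$ is connected and contains the Fermat curve, every member lies in the same stratum $\GP^1$. Alternatively, and perhaps more robustly, I would argue that the common presence of the automorphism $\mathbf{a}\mathbf{b}$ (acting as $[x:y:z]\mapsto[\zeta_p x:\zeta_p y:z]$) as an explicit $p$-fold symmetry with the same fixed-point data for \emph{every} $t$ forces the label $l=1$ in the datum $\langle ab^l\rangle$ of Proposition \ref{fanta}, hence $k=1$. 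The main obstacle I anticipate is the bookkeeping in the first step: correctly normalising the four branch points of $\Gamma$ and the choice of homotopy generators so that the unavoidable ambiguity by $\mathrm{Aut}(G_p)$ and by the braid-type action $\mathscr{B}$ is absorbed, landing exactly on the representative $\theta_1$ rather than some $\theta_k$ with $k\neq1$ that is merely equivalent to it — this requires care with the action of $\mathscr{B}$ on generating vectors, but is precisely the kind of normalisation the proof of Proposition \ref{thetak} already sets up.
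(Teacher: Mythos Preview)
Your approach is correct and takes a genuinely different route from the paper. The paper proves $\mathbf{F}_{2p}\in\GP^1$ from the top down: it uses that the \emph{full} automorphism group $G'\cong\mathbb{Z}_{2p}^2\rtimes\mathbf{D}_3$ of $\mathbf{F}_{2p}$ acts with triangle signature $(0;2,3,4p)$, writes an explicit surface-kernel epimorphism $\Theta:\Delta'\to G'$, locates inside $\Delta'$ a subgroup $\Gamma$ of signature $(0;2,2,3,p)$, and checks that the restricted epimorphism $\Theta|_\Gamma$ onto $\langle A^2,B^2,S,R\rangle\cong G_p$ is equivalent to $\theta_1$; it then proves a separate claim that $G'$ contains a unique conjugacy class of subgroups isomorphic to $G_p$, and finally invokes connectedness of $\FP^*$ for the inclusion. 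Your argument is more elementary and bottom-up: the order-$p$ stabilisers of the explicit matrix action of $G_p$ on \emph{any} $X_{p,t}$ are exactly $\langle\mathbf{a}\rangle,\langle\mathbf{b}\rangle,\langle\mathbf{ab}\rangle$ (the points with one vanishing coordinate), and since these three subgroups form a single $\mathbf{D}_3$-orbit which contains $\langle ab^k\rangle$ only for $k=1$ among $k\in\{1,\ldots,p-2\}$, one reads off $k=1$ uniformly in $t$ --- so the connectedness step, while valid, is not even strictly needed. What your approach buys is directness and uniformity; what the paper's approach buys is an explicit description of how the $(0;2,2,3,p)$-action sits inside the $(0;2,3,4p)$-action, which is informative in its own right. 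Two minor corrections to your write-up: the involutions of $G$ form a \emph{single} conjugacy class, not two (cf.\ the proof of Proposition~\ref{thetak}); and the $N_p$-stabiliser at a point with $z=0$ is $\langle\mathbf{ab}\rangle$ (projectively $\mathbf{ab}$ scales $z$ by $\zeta_p^{-1}$), not the element acting nontrivially on $x$ as your parenthetical suggests. Neither slip affects your main line.
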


Following \cite[Theorem 1]{P14}, if $X$ is a complex smooth plane projective algebraic curve of degree $d \geqslant 8$, then $$|\mbox{Aut}(X)| \leqslant 6d^2,$$and the equality holds if and only if $X \cong \mathbf{F}_{d}$. It follows that for $X \in \FP^* $ we have that \begin{equation}\label{cot}\mbox{Aut}(X) \neq G_p \mbox{ then } |\mbox{Aut}(X)|=6p^2\lambda  \mbox{ for some } \lambda=2,3,4\end{equation}The case $\lambda=4$ corresponds to the Fermat curve $\mathbf{F}_{2p}$.

\begin{rema}
Observe that \eqref{cot} implies that two natural generalisations of the Klein quartic do not belong to $\FP^*$.
\s

{\bf 1.} No member of $\FP^*$ is isomorphic to the algebraic curve
$xy^{2p-1} + yz^{2p-1} + zx^{2p-1}=0,$ due to the fact that its automorphism group is isomorphic to a semidirect product of the form $\Z_{4p^2-6p+3} \rtimes \Z_3.$ See \cite[Theorem 5.3]{ABS21}.

\s

{\bf 2.} No member of $\FP^*$ is isomorphic to a Hurwitz curve. More generally, this fact also holds for $\mathscr{C}_p$. Indeed, if $p > 23$ then, following \cite{ruben2}, the subgroup $N=\langle {a,b}\rangle$ is the unique Sylow $p$-subgroup of $G$ and therefore there exists an exact sequence  $$1 \to N \to \mbox{Aut}(X) \to A = \mbox{Aut}(X)/N \to 1,$$where $A$ is a spherical group (see Proposotion \ref{fanta}).  Thus,  $|\mbox{Aut}(X)| =|A|p^2 \leq 60p^2 < 84(g-1).$
\end{rema}

The following result shows that the case $\lambda=3$ is not realised.
 
\begin{prop} \label{con3}
    Let $p \geqslant 5$ be a prime number. No member of $\mathscr{C}_p$ (and hence of $\FP^*$) has automorphism group of order $18p^2$. 
\end{prop}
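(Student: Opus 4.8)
The plan is to argue by contradiction: suppose $X \in \mathscr{C}_p$ with $A := \operatorname{Aut}(X)$ of order $18p^2 = 6p^2 \cdot 3$, so that $A$ properly contains a subgroup isomorphic to $G = \G$ with index $3$. The strategy is to use the signature analysis already available from Proposition \ref{noprimo1} (the $G$-action has signature $(0;2,2,3,p)$) together with Singerman's list of signature pairs admitting an extension, and then to show that none of the combinatorially possible extensions can actually be realised at the level of surface-kernel epimorphisms. First I would invoke \cite{ruben2} (as in the preceding remark) to see that for $p$ large the Sylow $p$-subgroup $N = \langle \mathbf{a,b}\rangle \cong \mathbb{Z}_p^2$ of $G$ is normal in $A$ as well, giving an exact sequence $1 \to N \to A \to A/N \to 1$ with $|A/N| = 18$; then $A/N$ is a spherical (genus-zero) group acting on $X/N \cong \mathbb{P}^1$, and $G/N \cong \D_3$ has index $3$ in $A/N$. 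The small prime cases $p = 5, 7, \ldots$ below the threshold of \cite{ruben2} would be handled separately, either by the same $p$-Sylow argument when it still applies, or by direct inspection using the database \cite{conder} / routines of \cite{BRR13}, since the genera $(p-1)(2p-1)$ are then small enough.

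The heart of the argument is the analysis of the quotient $A/N$, a group of order $18$ containing $\D_3$ with index $3$, acting on $\mathbb{P}^1 = X/N$ with the induced branch data. From Proposition \ref{noprimo1}(2) the six branch points of $X \to X/N$ carry the $\D_3$-action on the quotient; passing to $A/N$ we would get an induced action on this configuration of six points, and the signature of $A$ acting on $X$ must be compatible, via Riemann–Hurwitz, with $2((p-1)(2p-1)-1) = 18p^2\bigl(2(h-1)+\sum(1-1/m_j)\bigr)$, forcing $h = 0$ and a very short signature $(0; m_1, \dots, m_r)$ with $\sum(1-1/m_i) = 2 - \tfrac{1}{9p^2}\cdot\text{(small)}$, hence $r = 3$ or $r = 4$ with the periods essentially $2, 3, p$ decorated by a factor of $3$ somewhere. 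I would then enumerate the finitely many candidate signatures for $A$ (most naturally $(0;2,2,3,p)$ cannot extend to itself, so one looks at $(0;2,3,3p)$, $(0;3,3,3p)$, $(0;2,2,3p)$, $(0;3,3,3,p)$ type possibilities) and cross-reference with Singerman's table \cite{Sing72}: the pair $(0;2,2,3,p) \hookrightarrow (0; \dots)$ with index $3$ only occurs for specific entries, and for each surviving pair I would apply the Bujalance–Cirre–Conder criterion \cite{BCC} to the surface-kernel epimorphism $\theta_k$ of Proposition \ref{thetak}.

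The decisive computation is to show that the normaliser condition fails. Concretely, an index-$3$ extension of the $\D_3$-action would require an element of order $3$ (or a subgroup of order $3$) in $\Aut(\Gamma)$ normalising $\theta_k(\Gamma)$ and inducing an automorphism of $G$ compatible with the longer signature; equivalently, it requires the generating vector $\theta_k = (s, sr, a^kb^{k-1}r^2, ab^k)$ to be "$3$-fold symmetric" in the sense spelled out in \cite{BCC}. Using the presentation \eqref{pg} I would compute the relevant conjugacy classes and the image of $\theta_k$ under the candidate braid-type automorphisms of $\Gamma$, and show the two generating vectors cannot be matched by any $\alpha \in \Aut(G)$: the obstruction is that $\Aut(G)$ acts on the set of order-$p$ cyclic subgroups $\langle ab^l\rangle$ in a way (already partly visible in Proposition \ref{fanta}, where only $\langle ab^k\rangle$ has a special ramification behaviour) that is incompatible with the threefold period $3p$ demanded by the extended signature.

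\emph{The main obstacle} I anticipate is the bookkeeping in this last step: one must correctly identify $\mathscr{B} \leq \Aut(\Gamma)$ for the signature $(0;2,2,3,p)$ and apply the \cite{BCC} conditions without error, since several of Singerman's pairs are *a priori* plausible and each must be eliminated. A clean way to shortcut much of this is to note that any order-$18p^2$ overgroup $A$ has $A/N$ of order $18$ acting faithfully on $\mathbb{P}^1$, and the finite subgroups of $\operatorname{PGL}_2(\mathbb{C})$ of order $18$ — there are essentially only the cyclic $\mathbb{Z}_{18}$, the dihedral $\D_9$, and $\mathbb{Z}_3 \times \D_3$ — each impose a branch signature on $\mathbb{P}^1$ (namely $(0;18,18)$-type, $(0;2,2,9)$-type, $(0;2,2,3,\dots)$ or $(0;3,3,\dots)$-type) that, when pulled back through $N$ and checked against Riemann–Hurwitz for genus $(p-1)(2p-1)$, leaves no consistent solution; this reduces the whole proposition to a short finite check that I would carry out case by case.
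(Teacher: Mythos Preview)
Your plan is workable in principle but circuitous, and it misses the one-line observation that collapses the whole problem. The paper's proof is immediate: since the signature $(0;2,2,3,p)$ is maximal for $p\geqslant 5$ (already used in the proof of Proposition~\ref{noprimo1}(3) via \cite{Sing72}), any proper overgroup $A\supsetneq G$ must act with a \emph{triangle} signature $(0;m_1,m_2,m_3)$. Riemann--Hurwitz with $|A|=18p^2$ and $g=(p-1)(2p-1)$ then forces
\[
\tfrac{1}{m_1}+\tfrac{1}{m_2}+\tfrac{1}{m_3}=\tfrac{7p+3}{9p},
\]
and a direct divisor search (each $m_i\mid 18p^2$) shows this has no solution for $p\geqslant 7$; the single surviving case $p=5$ (where $(2,3,90)$ solves the equation) is dispatched by Conder's tables \cite{conder}. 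No analysis of $\Aut(G)$, no Bujalance--Cirre--Conder criterion, and no structure theory of order-$18$ groups is needed.

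Two concrete corrections to your outline. First, you allow $r=4$ among the candidate signatures for $A$; but maximality of $(0;2,2,3,p)$ means precisely that it admits no Fuchsian overgroup of the same Teichm\"uller dimension, so $r=3$ is forced and your $4$-period candidates such as ``$(0;3,3,3,p)$-type'' should be discarded at the outset. Second, in your shortcut via $A/N\leqslant\mathrm{PGL}_2(\mathbb{C})$ you list $\mathbb{Z}_3\times\D_3$ among the order-$18$ subgroups of $\mathrm{PGL}_2(\mathbb{C})$: it is not one, since the finite subgroups of $\mathrm{PGL}_2(\mathbb{C})$ are cyclic, dihedral, $\mathbf{A}_4$, $\mathbf{S}_4$, $\mathbf{A}_5$. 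Once that is fixed, the only candidate containing $\D_3$ with index $3$ is $\D_9$, and it is then easily excluded because the six branch values of $X\to X/N$ cannot form a $\D_9$-invariant set on $\mathbb{P}^1$ (the $\D_9$-orbits have sizes $2$, $9$, $9$, $18,\ldots$). So your route can be salvaged, but the paper's arithmetic argument is both shorter and less error-prone.
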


We now deal with the case $\lambda=2$. Consider the following groups of order $12p^2.$
\begin{multline*}  H_1 = \langle  A, B, S, R, C : A^p,B^p,S^2,R^3,C^2,(SR)^2,  [A,B],[B,S], \\SASAB, RAR^{-1}AB,  RBR^{-1}A^{-1},  
    [C,A],[C,B],[S,C],[R,C]  \rangle \cong (\Z_p^2 \rtimes \D_3) \times \Z_2, \end{multline*}  \begin{multline*}  H_2 = \langle  A, B, S, R, C : A^p,B^p,S^2,R^3,C^2,(SR)^2,  [A,B],[B,S],\\SASAB, RAR^{-1}AB,  RBR^{-1}A^{-1},  
    (CA)^2,  (CB)^2, [S,C],[R,C]  \rangle \cong (\Z_p^2 \rtimes \D_3) \rtimes \Z_2. 
 \end{multline*}

\begin{prop}\label{prol2} Let $p \geqslant 5$  be a prime number different from 11. The family $\GP$ has exactly two members $Z_1$ and $Z_2$ with automorphism group of order $12p^2.$ These Riemann surfaces satisfy $$Z_i \in \mathscr{C}_p^i \mbox{ and } \mbox{Aut}(Z_i) \cong H_i \mbox{ for } i=1,2.$$  
Furthermore, $Z_1$ and $Z_2$ do not belong to $\mathcal{F}_p^*.$
\end{prop}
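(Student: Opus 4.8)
The plan is to combine the constraint \eqref{cot} (which forces $|\mathrm{Aut}(X)|=12p^2$ in the case $\lambda=2$) with a classification of the finite groups $G'$ of order $12p^2$ that contain $G=\G$ with index two. First I would observe that, by Proposition \ref{noprimo1}, the Sylow $p$-subgroup $N=\langle \mathbf{a},\mathbf{b}\rangle\cong\Z_p^2$ of $G$ is normal in $G$, and — invoking the argument already used in the Remark following Theorem \ref{cig} (via \cite{ruben2}, valid since we may assume $p$ large; the small cases $p=5,7$ being handled separately, and $p=11$ excluded by hypothesis) — $N$ remains the unique, hence characteristic and normal, Sylow $p$-subgroup of any overgroup $G'$ with $|G'|=12p^2$. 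Thus $G'/N$ has order $12$ and contains $G/N\cong\D_3$ with index two; the groups of order $12$ containing $\D_3=\mathbf{S}_3$ with index two are $\mathbf{S}_3\times\Z_2\cong\D_6$, $\Z_3\rtimes\Z_4$ (the dicyclic group), and $A_4$. Since $A_4$ has no subgroup of index two, only $\D_6$ and the dicyclic group survive, and in both the extra generator $C$ centralises the image of $R$ in $G'/N$ (as $R$ maps into the unique $\Z_3$). This already pins down the possible isomorphism types of $G'$ up to the action of $C$ on $N$: conjugation by $C$ is an automorphism of $N\cong\Z_p^2$ of order dividing $2$ commuting with the $\D_3$-action on $N$, and since that $\D_3$-action is irreducible over $\mathbb{F}_p$ (for $p\geqslant 5$), Schur's lemma forces $C$ to act on $N$ as $\pm 1$, giving exactly the two groups $H_1$ (the $+1$ case, $G'\cong(\Z_p^2\rtimes\D_3)\times\Z_2$) and $H_2$ (the $-1$ case).

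Next I would show each $H_i$ actually occurs, and occurs exactly once, as an automorphism group inside $\GP$, and identify the stratum. For this I would produce, for each $i$, a Fuchsian group $\Gamma'$ with $\Gamma\leqslant\Gamma'$ of the same Teichmüller dimension (so $\Gamma'$ has signature $(0;2,2,2,p)$, the unique signature from Singerman's list \cite{Sing72} pairing with $(0;2,2,3,p)$ and of the same dimension — namely one), and a surface-kernel epimorphism $\Theta_i\colon\Gamma'\to H_i$ whose restriction to $\Gamma$ is topologically equivalent to $\theta_1$ of Proposition \ref{thetak}. The restriction being essentially forced into the stratum $\GP^1$ can be checked by the explicit embedding $\Gamma\hookrightarrow\Gamma'$ recorded in \cite{Sing72}, \cite{BCC}: writing the generators $y_1,y_2,y_3,y_4$ of $\Gamma'$ with $y_i^2=y_4^p=y_1y_2y_3y_4=1$, the standard embedding realises the torsion generators of $\Gamma$ as $x_1=y_1$, $x_2=y_2y_1y_2$ (or a symmetric variant), $x_3=(y_2y_3)$ (an element of order $3$), $x_4=y_4$, so that $\Theta_i$ on $\Gamma'$ restricts to a generating vector of the shape $(s,\,s r s,\,\ast,\,a b)$ — which matches $\theta_1=(s,sr,a b^{0}\!\cdot\! r^2\!\cdot\! a b,\,a b)$ up to $\mathrm{Aut}(G)$ and the braid action $\mathscr{B}$, i.e. lands in $\GP^1$. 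Conversely, the conditions of Bujalance–Cirre–Conder \cite{BCC} show that only the stratum $\GP^1$ (for $H_2$) and $\GP^1$ again via the $+1$ extension (for $H_1$) admit such an extension, and that each admits it along a unique point of the one-dimensional stratum because the extended action has signature $(0;2,2,2,p)$ with Teichmüller dimension $0$ — a rigid action — so $Z_1,Z_2$ are single points; their membership in $\GP^1$ with $\mathrm{Aut}(Z_i)\cong H_i$ follows. The distinctness $Z_1\neq Z_2$ follows since $H_1\not\cong H_2$ ($H_1$ has centre of order $2p$-ish containing $C$ while $C$ is not central in $H_2$; concretely $Z(H_1)\supseteq\langle C\rangle$ but $[C,A]\neq 1$ in $H_2$).

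Finally, to see $Z_1,Z_2\notin\FP^*$: by Theorem \ref{cig}, $\FP^*\subset\GP^1$, so a priori the exceptional $Z_i$ could lie in $\FP^*$; but the \emph{only} exceptional member of $\FP^*$ — the only member with automorphism group strictly larger than $G_p$ — is the Fermat curve $\mathbf{F}_{2p}$ (this is part of the paper's main results, the classification of exceptional members), whose automorphism group has order $6p^2\cdot 4=24p^2$, not $12p^2$. Since $\mathrm{Aut}(Z_i)$ has order $12p^2\neq 24p^2$ and $Z_i$ is not the generic member (it has more automorphisms than $G_p$), $Z_i$ cannot be any member of $\FP^*$. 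I expect the main obstacle to be the bookkeeping in the second paragraph: verifying that the extension datum from \cite{Sing72}, \cite{BCC} forces the restricted generating vector into the single stratum $\GP^1$ rather than some other $\GP^k$, and confirming — for the finitely many small primes $p=5,7$ where the uniqueness-of-Sylow argument of \cite{ruben2} does not directly apply — that no sporadic overgroup of order $12p^2$ with non-normal $N$ arises; this last point can be settled either by a direct Sylow count ($n_p\equiv 1 \bmod p$ and $n_p\mid 12$ forces $n_p=1$ once $p>12$, and for $p=5,7$ one checks $n_5,n_7\in\{1,6,?\}$ are incompatible with the group containing $G$) or by a short computer-algebra verification in the spirit of \cite{BRR13}.
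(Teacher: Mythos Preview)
Your first paragraph's group-theoretic classification is essentially sound and in fact a bit cleaner than the paper's: the irreducibility-plus-Schur argument for the action of $C$ on $N$ neatly replaces the paper's case check of $CAC$ and $CBC$. (One slip: the dicyclic group $Q_{12}$ has a unique involution, so $\D_3$ does not embed in it; only $\D_6$ survives, as you ultimately need.) The paper reaches the same conclusion $H\cong H_1$ or $H_2$ via Schur--Zassenhaus and a direct analysis of the possible order-$12$ quotients.

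The second paragraph, however, contains a genuine error. The extended signature is \emph{not} $(0;2,2,2,p)$: that signature has Teichm\"uller dimension one (not zero, as you also write), and an index-two coarea check shows its coarea $2\pi(\tfrac{1}{2}-\tfrac{1}{p})$ is not half the coarea $2\pi(\tfrac{2}{3}-\tfrac{1}{p})$ of $(0;2,2,3,p)$. In fact, as the paper already records in proving Proposition~\ref{noprimo1}(3), Singerman's list contains no one-dimensional overgroup of $(0;2,2,3,p)$ for $p\geqslant 5$; every proper extension must drop to a triangle group. The paper shows the correct signature is $(0;2,6,2p)$: the quotient $H/G\cong\Z_2$ acts on $X/G\cong\mathbb{P}^1$ fixing the branch values marked $3$ and $p$ (there is only one of each) and swapping the two values marked $2$, yielding marks $2,6,2p$ downstairs. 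It then classifies surface-kernel epimorphisms $\Delta_{(0;2,6,2p)}\to H_i$ up to $\Aut(H_i)$ (exactly one for each $i$), restricts each to the index-two subgroup $\Gamma$ of signature $(0;2,2,3,p)$, and identifies the restriction with $\theta_1$ for $H_1$ but with $\theta_2$ for $H_2$. Your assertion that both $Z_i$ land in $\GP^1$ therefore contradicts the very statement you are proving: the proposition places $Z_2\in\GP^2$.

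Finally, your argument that $Z_i\notin\FP^*$ is circular: you invoke ``the only exceptional member of $\FP^*$ is the Fermat curve,'' but that is Theorem~\ref{fullg}, whose proof \emph{uses} Proposition~\ref{prol2}. The paper instead argues directly: every automorphism of a smooth plane curve of degree $2p\geqslant 4$ is linear, so if $Z_i\in\FP^*$ there would exist $\mathbf{c}\in\mathrm{PGL}(3,\mathbb{C})$ of order two realising the extra generator $C$ of $H_i$ alongside the matrices $\mathbf{a},\mathbf{b},\mathbf{r},\mathbf{s}$ of Proposition~\ref{noprimo1}; the commutation relations of $H_i$ then force $\mathbf{c}$ to be the identity.
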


As a consequence of the propositions above, we obtain the following theorem.

\begin{theo}\label{fullg}Let $p \geqslant 5$  be a prime number different from 11. The automorphism group of the members of $\FP^*$ is given by \begin{displaymath}
\mbox{Aut}(X_{p,t}) \cong \left\{ \begin{array}{cc}
\mathbb{Z}_{2p}^2 \rtimes \mathbf{D}_3 & \textrm{if $X_{p,t} \cong \mathbf{F}_{2p}$}\\
\mathbb{Z}_{p}^2 \rtimes \mathbf{D}_3 & \textrm{otherwise }
\end{array} \right.
\end{displaymath}
\end{theo}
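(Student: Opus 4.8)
The plan is to assemble Theorem~\ref{fullg} from the structural results already established. First I would recall that a smooth member $X_{p,t}\in\FP^*$ carries the group $G_p\cong\G$ by Proposition~\ref{noprimo1}(1), and that by Theorem~\ref{cig} every such member lies in the stratum $\GP^1$. Hence $\mathrm{Aut}(X_{p,t})$ is a group containing $\G$ and acting on a Riemann surface in $\GP^1$. If $\mathrm{Aut}(X_{p,t})=G_p$ there is nothing to prove; so assume $\mathrm{Aut}(X_{p,t})\neq G_p$. Since each $X_{p,t}$ is a smooth plane curve of degree $2p\geqslant 10$, the bound $|\mathrm{Aut}(X)|\leqslant 6(2p)^2=24p^2$ of \cite[Theorem 1]{P14} applies, and as recorded in \eqref{cot} we get $|\mathrm{Aut}(X_{p,t})|=6p^2\lambda$ with $\lambda\in\{2,3,4\}$, with $\lambda=4$ forcing $X_{p,t}\cong\mathbf{F}_{2p}$ (again by \cite[Theorem 1]{P14}, the equality case).

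Next I would eliminate the intermediate cases. The case $\lambda=3$ is excluded directly by Proposition~\ref{con3}, which asserts that no member of $\GP\supset\FP^*$ has automorphism group of order $18p^2$. For $\lambda=2$, Proposition~\ref{prol2} identifies the only two members of $\GP$ with automorphism group of order $12p^2$ as $Z_1$ and $Z_2$, and states explicitly that neither $Z_1$ nor $Z_2$ belongs to $\FP^*$; therefore no member of $\FP^*$ has $\lambda=2$ either. (This is the point where the hypothesis $p\neq 11$ enters, inherited from Proposition~\ref{prol2}.) Consequently, for $X_{p,t}\in\FP^*$ with $\mathrm{Aut}(X_{p,t})\neq G_p$ the only surviving possibility is $\lambda=4$, i.e.\ $X_{p,t}\cong\mathbf{F}_{2p}$.

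It then remains to pin down the isomorphism type in the two cases. When $X_{p,t}\not\cong\mathbf{F}_{2p}$ we have just shown $\mathrm{Aut}(X_{p,t})=G_p\cong\G\cong\Z_p^2\rtimes\D_3$. When $X_{p,t}\cong\mathbf{F}_{2p}$, I would invoke the classical description of the automorphism group of the Fermat curve of degree $d$ (a plane curve of degree $d\geqslant 4$ has automorphism group $\Z_d^2\rtimes S_3$, the diagonal torus extended by coordinate permutations), which for $d=2p$ gives $\Z_{2p}^2\rtimes S_3\cong\Z_{2p}^2\rtimes\D_3$; its order is $6(2p)^2=24p^2=6p^2\cdot 4$, consistent with $\lambda=4$. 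Assembling these two cases yields the displayed formula in the theorem.

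The main obstacle here is not in this assembly argument, which is essentially bookkeeping over $\lambda\in\{2,3,4\}$, but rather lies upstream in the results being cited — in particular Proposition~\ref{prol2}, whose proof must both produce the two exceptional surfaces $Z_1,Z_2$ and show they are the \emph{only} order-$12p^2$ members of $\GP$ and that they fall outside $\FP^*$; the exclusion of $p=11$ suggests a genuine subtlety (likely an exceptional coincidence of signatures or a sporadic extra automorphism at that prime) that one must be careful to flag. In writing the proof of Theorem~\ref{fullg} itself, the only care needed is to state clearly that the hypothesis $p\neq 11$ is used solely through Proposition~\ref{prol2}, and to cite the standard computation of $\mathrm{Aut}(\mathbf{F}_{2p})$ rather than re-deriving it.
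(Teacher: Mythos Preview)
Your proposal is correct and follows essentially the same route as the paper: invoke \eqref{cot} (Pambianco's bound) to reduce to $\lambda\in\{2,3,4\}$, eliminate $\lambda=3$ via Proposition~\ref{con3}, eliminate $\lambda=2$ via Proposition~\ref{prol2} (this is where $p\neq 11$ enters), and identify $\lambda=4$ with the Fermat curve. The only differences are cosmetic: your detour through Theorem~\ref{cig} (placing $\FP^*$ in $\GP^1$) is not actually needed for this argument, and the paper takes the description $\mathrm{Aut}(\mathbf{F}_{2p})\cong\Z_{2p}^2\rtimes\mathbf{D}_3$ as already recorded (see \eqref{ppf}) rather than citing it anew.
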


We end this section by providing a complete classification of all those compact Riemann surfaces of genus $(p-1)(2p-1)$ endowed with a group of automorphisms isomorphic to $\G.$ To state this classification, we need to introduce some notation. Let $\Delta$ be a Fuchsian group of signature 
$(0; 3, 2p, 2p)$ canonically presented as $$\langle y_1,y_2, y_3 : y_1^3=y_2^{2p}= y_3^{2p}=y_1y_2 y_3=1 \rangle,$$and consider the surface-kernel epimorphisms $\varphi_n: \Delta \to \G$ given by $$\varphi_n(y_1, y_2, y_3)=(a^{1-n}br^2,bsr, b^{n}sr^2) \mbox{ where } n \in \{1, \ldots, p-1\}.$$We denote by $Y_n$ the compact Riemann surface $\mathbb{H}/\mbox{ker}(\varphi_n)$ for each $n \in \{1, \ldots, p-1\}.$

\begin{theo}\label{clasi}
Let $p\geqslant 5$ be a prime number different from 11. If $W$ is a compact Riemann surface of genus $(p-1)(2p-1)$ with a group of automorphisms isomorphic to $\G$ then
\begin{enumerate}
\item $W \in \mathscr{C}_p$ or
\item $W$ is isomorphic to $Y_j$ for some $j \in \{ 2, \ldots, p-2\}$.
\end{enumerate} Furthermore, up to isomorphism, there are exactly $\tfrac{p-3}{2}$ Riemann surfaces in the latter case, they do not belong to $\GP$ and the automorphism group of each of them is isomorphic to $\G.$
\end{theo}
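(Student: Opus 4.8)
\textbf{Proof plan for Theorem \ref{clasi}.}

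The plan is to use Riemann's existence theorem together with a careful analysis of surface-kernel epimorphisms $\Theta: \Gamma_\sigma \to \G$, where $\Gamma_\sigma$ ranges over all Fuchsian groups whose signature $\sigma$ is admissible for a $\G$-action on genus $(p-1)(2p-1)$. First I would apply the Riemann-Hurwitz formula with $|G|=6p^2$ and $g=(p-1)(2p-1)$: this forces $2(h-1)+\sum(1-1/m_j)=\tfrac{2((p-1)(2p-1)-1)}{6p^2}=\tfrac{2p^2-3p}{3p^2}<1$, so $h=0$ and the only possible branching orders $m_j$ are divisors of $6p^2$ satisfying the resulting tight constraint. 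A short arithmetic elimination (using that $G$ has elements of orders $1,2,3,6,p,2p,3p$ only — orders of elements in a semidirect product $\Z_p^2\rtimes\D_3$, which can be read off the presentation \eqref{pg} since $\D_3$ acts without fixed nonzero vectors) leaves exactly two candidate signatures: $(0;2,2,3,p)$, which gives rise to $\mathscr{C}_p$, and $(0;3,2p,2p)$. Any other signature either violates Riemann-Hurwitz or cannot be realised by a surface-kernel epimorphism onto $\G$ because $\G$ is not generated by the prescribed torsion (e.g.\ signatures all of whose periods divide $p^2$ would land in $N_p=\langle a,b\rangle$, which is a proper subgroup; mixed signatures like $(0;2,3,2p)$ or $(0;2,6,p)$ fail the order/product constraints). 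This disposes of the dichotomy in items (1) and (2).

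Next I would enumerate, up to the equivalence $\theta\sim\theta'$ of \S\ref{prelis}, the surface-kernel epimorphisms $\Delta\to\G$ of signature $(0;3,2p,2p)$. Writing a generating vector as $(u_1,u_2,u_3)$ with $u_1^3=u_2^{2p}=u_3^{2p}=u_1u_2u_3=1$, the condition $o(u_2)=o(u_3)=2p$ forces $u_2,u_3$ to lie in one of the three coset-pairs $\{rs,r^2s\}\cdot\langle a,b\rangle$ type elements of order $2p$ (an element of $\G$ has order $2p$ iff its image in $\D_3$ is a reflection and its $\Z_p^2$-part is nonzero in the appropriate eigen-line); then $u_1=(u_2u_3)^{-1}$ automatically has order $3$ provided $u_2,u_3$ project to \emph{distinct} reflections. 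Modding out by $\Aut(\G)$ and by the braid/mapping-class action $\mathscr{B}$ on the three-period sphere (the group $\mathscr{B}$ here is generated by the braid moves and the automorphism permuting $y_2,y_3$, since the two periods $2p$ are equal), I expect the normal form $\varphi_n(y_1,y_2,y_3)=(a^{1-n}br^2,\,bsr,\,b^n sr^2)$ with $n\in\{1,\dots,p-1\}$, and then an identification of which $\varphi_n$ are mutually equivalent. The inner/diagonal automorphisms of $\G$ — conjugation by powers of $a,b$ and by $r$ — act on the parameter $n$ by an affine map of $\Z_p$ (a dihedral-type action $n\mapsto n,\ n\mapsto 1-n$ roughly), which should collapse the $p-1$ vectors to about $\tfrac{p-1}{2}$ strata; one then checks that $\varphi_1$ and $\varphi_{p-1}$ yield surfaces whose automorphism group is \emph{strictly larger} than $\G$ (these are the ones absorbed into bigger families — e.g.\ the $(0;3,2p,2p)$-uniformisation of a surface already appearing in $\mathscr{C}_p$ via a Singerman-type inclusion of signatures $(0;2,2,3,p)\subset$ something, or into the $H_i$-families of Proposition \ref{prol2}), so they are excluded from the count ``$W$ with $\Aut(W)\cong\G$''. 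Removing those two boundary cases from $\{1,\dots,p-1\}$, and using that $\varphi_n\sim\varphi_{p-n}$ (or the analogous pairing), leaves exactly $\tfrac{p-3}{2}$ classes $Y_j$, $j\in\{2,\dots,p-2\}$ up to the pairing — which I would phrase precisely once the equivalence is pinned down.

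Then I would verify the three remaining assertions. That $Y_j\notin\GP$: since $\GP$ is by definition the locus with a $\G$-action of signature $(0;2,2,3,p)$, it suffices to show the $\G$-action on $Y_j$ of signature $(0;3,2p,2p)$ is \emph{not} topologically equivalent to one of signature $(0;2,2,3,p)$; this follows because a surface in $\GP^k$ carries a $\G$-action whose quotient orbifold has genus $0$ and four branch points, and the number/orders of branch points is a topological invariant of the action, so a single surface cannot simultaneously admit a $\G$-quotient of each type unless its full automorphism group is bigger — but for generic $j$ we will have shown $\Aut(Y_j)=\G$ exactly (via Singerman's list and \cite{BCC}: the signature $(0;3,2p,2p)$ admits no proper Fuchsian overgroup pair for $p\geq 5$, $p\neq 11$, the excluded prime $11$ being where an exceptional inclusion occurs, mirroring the rôle of $11$ in Proposition \ref{prol2} and Theorem \ref{fullg}). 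That the full automorphism group of each $Y_j$ in the surviving range is isomorphic to $\G$ is exactly this maximality statement. \textbf{The main obstacle} I anticipate is the bookkeeping of the equivalence classes: correctly computing the action of $\Aut(\G)\times\mathscr{B}$ on the set of generating vectors of signature $(0;3,2p,2p)$ — in particular determining $\Aut(\G)$ (which contains, besides inner automorphisms, the $\mathrm{GL}_2(\Z_p)$-twists on $\langle a,b\rangle$ that normalise the $\D_3$-action, a subgroup one must identify precisely) and checking that its combined action with the braid moves yields exactly $\tfrac{p-3}{2}$ orbits with the two distinguished orbits removed, uniformly for all $p\geq 5$ with $p\neq 11$. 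The case $p=11$ has to be excluded here for the same reason it is excluded in Theorem \ref{fullg}: an extra coincidence of Fuchsian-group inclusions (Singerman's list contains the pair with $(0;3,2p,2p)$ only sporadically) makes some $Y_j$ acquire extra automorphisms.
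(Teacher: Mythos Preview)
Your overall three-step strategy --- determine the admissible signatures, enumerate generating vectors for each, then check maximality via Singerman's list and \cite{BCC} --- matches the paper's, but several of your concrete claims are wrong and would derail the execution. First, $\G$ has \emph{no} elements of order $6$ or $3p$: since $r$ acts on $\langle a,b\rangle$ with $1+r+r^2=0$, every element projecting to a rotation in $\D_3$ has order exactly $3$, and elements projecting to a reflection have order $2$ or $2p$. This matters because the signature $(0;6,6,p)$ \emph{does} satisfy Riemann--Hurwitz and must be eliminated precisely by the absence of order-$6$ elements; with your claimed list of orders it would survive your first filter. The paper sidesteps a direct signature enumeration entirely: Lemma~\ref{supW} first shows that the normal subgroup $N=\langle a,b\rangle$ acts with signature $(0;p,\stackrel{6}{\ldots},p)$, and then reads the possible $G$-signatures off from how $\D_3\cong G/N$ permutes the six branch values on $W/N\cong\mathbb{P}^1$.

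Second, the normaliser pairing on $\{\varphi_2,\ldots,\varphi_{p-2}\}$ is $n\leftrightarrow n^{-1}$ (inverse in $\Z_p$), not $n\leftrightarrow p-n$; the paper obtains it from $N(\Delta)/\Delta\cong\Z_2$ swapping $y_2,y_3$. By accident both involutions of $\{2,\ldots,p-2\}$ are fixed-point-free, so your count $\tfrac{p-3}{2}$ happens to be right, but the actual isomorphisms are $Y_n\cong Y_{1/n}$, not the ones you guessed. Moreover, $Y_1$ and $Y_{p-1}$ are not merely ``excluded from the count'': the paper shows explicitly that $Y_1\cong Z_1$ and $Y_{p-1}\cong Z_2$, which places them in $\GP$ and hence in case~(1) of the dichotomy. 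Finally, the restriction $p\neq 11$ is not due to any sporadic Singerman inclusion for $(0;3,2p,2p)$ --- there is none; it is inherited from Proposition~\ref{prol2}, where uniqueness of the Sylow $p$-subgroup in a group of order $12p^2$ can fail precisely when $p=11$.
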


We anticipate that there is an intimate relation between the Riemann surfaces appearing in the theorem above and the so-called generalised Fermat curves; we shall discuss that in \S\ref{CFG}.

\section{Proofs}\label{dems}

\subsection{Proof of Proposition \ref{noprimo1}}
 Let $p \geqslant 2$ be a prime number and let $X_{p,t} \in \FP^*.$ The first statement is obtained after a routine computation. We denote by $$\pi : X_{p,t} \to Y_{p,t}:=X_{p,t}/N_p$$ the regular covering map given by the action of $N_p $ on $X_{p,t}$. The branch points of  $\pi$ agree with the points of $X_{p,t}$ that are fixed by the nontrivial elements of $N_p.$ These points are exactly $6p$, the $N_p$-stabiliser of each one of them has order $p$, and they split into six orbits under the action of $N_p.$ It follows that the signature of the action of $N_p$ is $(\gamma; p^6).$ The Riemann-Hurwitz formula implies $\gamma=0$. As $N_p$ is a normal subgroup of $G_p$, one has that ${\bf r}$ induces an automorphism $\tilde{\bf r}$ of $Y_{p,t}$. Consider the associated regular covering map $$\pi': Y_{p,t} \cong \mathbb{P}^1 \to X_{p,t}/\langle N_p, {\bf r} \rangle \cong Y_{p,t}/\langle \tilde{\bf r}\rangle \cong \mathbb{P}^1.$$Observe that $\tilde{\bf r}$ can be assumed to be $\tilde{\bf r}(z)=\exp(2 \pi i /3)z$ and  the ramification values of $\pi$ form two $\langle \tilde{\bf r}\rangle$-orbits. It follows that the signature of the action of $\langle N, {\bf r} \rangle$ on $X_{p,t}$ is $(0; 3,3,p,p)$. Similarly, as ${\bf s}$ normalises $\langle N_p, {\bf r} \rangle$, it induces an automorphism $\tilde{{\bf s}}$ of $Z_{p,t}:=Y_{p,t}/\langle \tilde{{\bf r}}\rangle$ in such a way that $Z_{p,t}/\langle \tilde{\bf s} \rangle \cong X_{p,t}/G_p.$ As $\tilde{\bf s}$ permutes the branch values of $\pi'$,  the signature of the action of $G_p$ on $X_{p,t}$ is $(0; 3, 2p, 2p), (0;2,2,3,p)$ or $(0; p, 6,6).$  The latter signature is not realised as $G_p$ does not have elements of order six. The proof of statement (2) follows from the fact that the $G_p$-stabiliser of $$[1: \lambda: 1] \in X_{p,t} \mbox{ where }\lambda^p = -t \pm \sqrt{t^2 - t - 2}$$has order two. 
 
 Let $\Gamma$ be a Fuchsian group of signature $(0; 2,2,3,p)$ such that $X_{p,t}/G_p \cong \mathbb{H}/\Gamma.$ If the automorphism group of $X_{p,t}$ contains $G_p$ properly, then the corresponding covering map $$X_{p,t}/G_p \to X_{p,t}/\mbox{Aut}(X_{p,t})$$induces an inclusion  $\Gamma< \Gamma'$, where $\Gamma'$ is a Fuchsian group. As the dimension of $\Gamma$ is one, the dimension of $\Gamma'$ equals zero or one. However, as proved by Singerman in \cite{Sing72}, no one-dimensional Fuchsian group contains properly a Fuchsian group of signature $(0; 2, 2, 3, p)$ for $p\geqslant 5,$ showing that the dimension of $\Gamma'$ is zero. It follows that only finitely many exceptional members of $\mathcal{F}_p$ have automorphism groups different from $G_p$, proving statement (3).

\subsection{Proof of Proposition \ref{thetak}} \label{vgs} Assume that the action of $G$ on $X \in \mathscr{C}_p$ is represented by the surface-kernel epimorphism $\theta =(g_1, g_2, g_3, g_4).$ The involutions of $G$ are $$a^{2i} b^i s, a^i b^{2i} sr \mbox{ and } a^i b^{-i} sr^2 \mbox{ for } i \in \{0, \ldots, p-1\},$$and the elements of $G$ of order three are $a^l b^m r$ and $a^l b^m r^2$ for $l,m \in \{0, \ldots, p-1\}.$ As the involutions of $G$ are conjugate, after considering the braid transformation $\Phi_1$ one obtains that $\theta$ is topologically equivalent to either $$(s, a^{i} b^{-i} sr^2,  a^{l} b^{m} r, (a^{i-1+m} b^{-2i-1})^{-1}  ) \mbox{ or }( s, a^{i} b^{2i} sr,  a^{l} b^{m} r^2, (a^{-i-1-m} b^{i-1})^{-1}  ).$$The conjugation by $s$ allows us to see that these surface-kernel epimorphisms are equivalent. In addition, if we conjugate the latter by $b^{-i}$ then we obtain that 
    \begin{equation}\label{epimorfismo}
     \theta \sim   (s, sr, a^l b^m r^2, a^{l-m} b^l) \mbox{ where } l,m \in \{0, \ldots, p-1\}.
    \end{equation}There are two cases to consider.    

\s

{\bf 1.} Assume $m = l$. Then \eqref{epimorfismo} turns into $(s, sr, a^l b^l r^2, b^l),$  where $l \neq 0$. The automorphism $G$ given by $a\to a^{e}, \ b\to b^{e}, \ s\to s, \ r \to r$ where $el \equiv 1 \mbox{ mod }p$ shows that \eqref{epimorfismo} is topologically equivalent to $\theta' := (s, sr, abr^2, b).$

\s

{\bf 2.} Assume $m \neq l$. The automorphism of $G$ given by $a\to a^{e}, b\to b^{e}, s\to s,  r \to r$ where $e(l-m) \equiv 1 \mbox{ mod }p$ shows that \eqref{epimorfismo} is topologically equivalent to $\theta_k = (s, sr, a^k b^{k-1}r^2, ab^k)$ for some $k \in \{0, \ldots, p-1\}.$

\s

Finally, the transformation $\Phi_3 \circ \Phi_3$ coupled with the automorphism \begin{equation}\label{coca}a \mapsto a^{-1}, b \mapsto b^{-1}, r \mapsto r, s \mapsto s \end{equation}produce the equivalences $\theta_0 \sim \theta_1 \sim \theta'$ and $\theta_2 \sim \theta_{p-1},$ and the proof of the first statement is done. Note that $\Phi_3 \circ \Phi_3$ coupled with \eqref{coca} yield the equivalence $\theta_k \sim (s, sr, ab^{k}r^2, a^{1-k}b).$ Let $e$ such that $e(1-k) \equiv 1 \mbox{ mod }p.$ We can consider the automorphism of $G$ given by $ a\to a^{e}, \ b\to b^{e}, \ s\to s, \ r \to r$ to see that $\theta_k \sim \theta_{e}.$ We define the bijective map $$f: \{2, \ldots, p-2\} \to \{2, \ldots, p-2\} \text{\ such that \ } f(i)= (1-i)^{-1}$$where the inverses are taken modulo $p.$ Observe that  $f^{-1}(j)=1- j^{-1}.$ If it easy to see that $f$ has a fixed point if and only if $p\equiv 1 \mbox{ mod 3}$ (in such a case, $f$ has two fixed points) and the second statement follows.

\subsection{Proof of Proposition \ref{fanta}}We shall only give a step-by-step proof for the case $H=\langle a s\rangle \cong \mathbb{Z}_{2p},$ as the remaining cases are proved analogously. We refer to \cite[\S 3]{Rojas07} for the ramification of intermediate coverings.

We denote the regular covering maps given by the action of $G$ and $H$ on $X \in  \GP^k$ by $$\pi : X \to X/G \, \mbox{ and } \, \pi' : X \to X/H.$$Let $P_1, P_2, P_3$ and $P_4$ denote the (ordered) ramification values of  $\pi$. As the ramification values of $\pi$ that lie in $\pi^{-1}(P_3)$ have $G$-stabiliser of order $3$, the $H$-stabilizers of them are trivial. Thus, the ramification points of $\pi'$ are contained in the $\pi$-fiber of $P_1, P_2$ and $P_4.$

\s

The $\pi$-fiber of $P_1$ consists of $3p^2$ points and their $G$-stabilisers are conjugate to $\langle s \rangle.$ As the conjugacy class of $\langle s \rangle$ has length $3p$ coupled with the fact that $\langle as \rangle$ has a unique involution, we see that among such $3p^2$ points, exactly $p$ of them have non-trivial $H$-stabiliser. These points give rise to one ramification value of $\pi'$ marked with $2.$ The same argument shows that the ramification points in the $\pi$-fiber of $P_2$ give rise to one ramification value of $\pi'$ marked with $2.$ On the other hand, the $\pi$-fiber of $P_4$ consists of $6p$ points with $G$-stabiliser conjugate to $\langle ab^k\rangle.$ Note that the elements of order $p$ of $\langle as \rangle$ are $b^i$ for $i=1, \ldots, p-1.$ 

\s

{\bf (1)} If $k \neq 1$ then  $\langle ab^k \rangle$ and $\langle b \rangle$ are nonconjugate and therefore the ramification points of $\pi$ are not ramification points of $\pi'.$ Thus, the signature of the action of $\langle as \rangle$ is $(\gamma; 2,2)$ where, by the Riemann Hurwitz formula, $\gamma$ equals $p-1.$

\s

{\bf (2)} If $k = 1$ then, as $\langle ab\rangle$ and $\langle b \rangle$ are conjugate, among the $6p$ ramification points of $\pi$, exactly $2p$ of them are ramification points of $\pi'.$ Such points yield $p$ ramification values of $\pi'$ marked with $p.$ Thus, the signature of the action of $\langle as \rangle$ is $(\gamma; 2,2, p, \stackrel{p}{\ldots}, p)$ and the value of $\gamma$ follows from the Riemann Hurwitz formula.

\subsection{Proof of Theorem \ref{cig}}

We recall that the automorphism group $G'$ of $\mathbf{F}_{2p}$ is isomorphic to  \begin{equation}\label{ppf}
 \langle A, B, S, R :  A^{2p},B^{2p},R^3,S^2,(SR)^2,[A,B],1, RAR^{-1}(AB), RBR^{-1}A^{-1}, SAS(AB), [S,B]\rangle,
\end{equation}and that the signature of this action is $(0;2, 3, 4p)$. Let $\Delta'$ be a Fuchsian group of this signature  $$\Delta' = \langle z_1, z_2, z_3:z_1^2 = z_2^{3} = z_3^{4p}= z_1z_2z_3 =1\rangle.$$ The action of  $G'$ on $\mathbf{F}_{2p}$ is represented by the surface-kernel epimorphism $$\Theta: \Delta' \to G' \mbox{ given by }\Theta(z_1)=SR^2, \Theta(z_2)=AR^2, \Theta(z_3)=ABSR.$$Observe that the elements  $z_2^2 z_1 z_2, z_2z_1z_2^2, z_1z_2^2z_1$ and  $z_3^{-4}$ generate a subgroup $\Gamma$ of $\Delta'$ isomorphic to a Fuchsian group of signature $(0; 2, 2, 3, p)$. In addition, $$\Theta(z_2^2 z_1 z_2)=A^2BS, \ \Theta(z_2 z_1 z_2^2)=SR, \ \Theta(z_1 z_2^2 z_1)=ABR^2, \ \Theta(z_3^{-4})=A^{-2}$$
and $\Theta(\Gamma) = H:=\langle A^2 , B^2, S, R \rangle \cong \G.$ Consequently, the restriction of $\Theta$ to $\Gamma$  is a surface-kernel epimorphism  $\theta: \Gamma \to H$ that represents the action of $H$ on $\mathbf{F}_{2p}$. Note that, after conjugating $\Theta$ by $A^{-1}$, we have that $\Theta$ is equivalent to $\theta_0=(S, SR,  B^2 R ^2, A^{-2}).$ Finally, this last surface-kernel epimorphism is, as shown in the proof of Proposition \ref{thetak}, topologically equivalent to $\theta_1$ and the proof of the first statement is done.

\s

{\it Claim.} $\mathbf{F}_{2p}$ belongs to $\GP^1$ only.

\s

Assume that $\mathbf{F}_{2p}$ belongs to an equisymmetric stratum of $\GP$ determined by a surface-kernel epimorphism topologically inequivalent to $\theta_1.$ This implies, in particular,  that $G'=\mbox{Aut}(\mathbf{F}_{2p})$ has a subgroup $H'$ such that $$H' \cong H=\langle A^2 , B^2, S, R\rangle \mbox{ but $H$ and $H'$ are nonconjugate}.$$ As $\langle A^2, B^2 \rangle$ is the unique (normal) Sylow $p$-subgroup of $G',$ we have that necessarily $H'=\langle A^2, B^2 \rangle \rtimes \langle S', R' \rangle$ where $S', R'$ satisfy $\langle S', R'\rangle \cong \mathbf{D}_3.$ Now, as each element of $G'$ of order three is conjugate to either $R$ or $R^2$ we obtain that $H'$ is conjugate to $\langle A^2, B^2 \rangle \rtimes \langle R, S'\rangle.$ After some routine computations, we see that the involution $S'$ is necessarily equal to $S, SR$ or $SR^2$. In each case we obtain that $H$ is conjugate to $H'$, proving the claim.

\s

Finally,  the second statement follows from the connectedness of $\mathcal{F}_p^*$.

\subsection{Proof of Proposition \ref{con3}}Assume that there exists $X \in \GP$ with automorphism group of order $18p^2.$ The maximality of the signature $(0;2,2,3,p)$ implies that $\mbox{Aut}(X)$ acts on $X$ with triangle signature; say, $(0; m_1, m_2, m_3).$ Then the Riemann-Hurwitz formula implies that $$\tfrac{1}{m_1} +\tfrac{1}{m_2}+\tfrac{1}{m_3} = \tfrac{7p+3}{9p}.$$A routine computation shows that this equation admits solutions if and only if $p = 5$. However, according to \cite{conder}, there does not exist a Riemann surface of genus $36$ endowed with a group of automorphisms of order 450, contradicting our assumption.

\subsection{Proof of Proposition \ref{prol2}}

The proof of the proposition is a consequence of the following lemmata.

\begin{lemm} \label{l1} Let $p \geqslant 5$ be a prime number.  If a group $H$ of order $12p^2$ acts on some member of $\mathscr{C}_p$ then the signature of the action is $(0; 2,6,2p)$.
\end{lemm}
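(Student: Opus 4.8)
The plan is to determine the possible signature of an action of a group $H$ of order $12p^2$ on a member $X \in \mathscr{C}_p$. Since $X \in \mathscr{C}_p$ carries an action of $G = \G$ with the maximal signature $(0;2,2,3,p)$, any larger automorphism group extends this action, and by Singerman's list \cite{Sing72} the only way a Fuchsian group of signature $(0;2,2,3,p)$ (for $p \geqslant 5$) embeds with finite index in a one- or zero-dimensional Fuchsian group forces $\Delta'$ to be a triangle group, i.e. the action of $H$ has a triangle signature $(0; m_1, m_2, m_3)$. So first I would invoke this to reduce to the triangle case, then write down the Riemann--Hurwitz equation $2((p-1)(2p-1)-1) = 12p^2\left(1 - \tfrac{1}{m_1} - \tfrac{1}{m_2} - \tfrac{1}{m_3}\right)$, which simplifies to
$$\frac{1}{m_1} + \frac{1}{m_2} + \frac{1}{m_3} = \frac{5p+3}{6p}.$$

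Next I would enumerate the integer solutions $(m_1,m_2,m_3)$ of this equation with $m_i \geqslant 2$. Here I would use that $H$ contains the normal Sylow $p$-subgroup $N = \langle a,b \rangle \cong \mathbb{Z}_p^2$ (at least for $p$ large; for small $p$ one checks directly, which is presumably why $p = 11$ is excluded or handled separately elsewhere) — indeed, since $|H| = 12p^2$ and $H$ contains $G$, by Sylow and the structure of $G$ the group $N$ is normal in $H$, so $H/N$ has order $12$ and the signature of the $H/N$-action on $X/N \cong \mathbb{P}^1$ is a triangle signature of a spherical quotient. Combining the constraint that $p \mid $ (at least one $m_i$, forced by the presence of order-$p$ elements with the right stabiliser behaviour coming from the $N$-action having signature $(0;p^6)$) with the equation above, I expect to be left with $(m_1,m_2,m_3) = (2,6,2p)$ as the only admissible possibility; the divisibility by $p$ kills the competing solutions and the remaining small cases $(2,m_2,m_3)$ with $m_2,m_3$ coprime to $p$ cannot satisfy the equation.

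The arithmetic is routine once set up: rewriting, if none of the $m_i$ is divisible by $p$ then the left side has denominator coprime to $p$ while the right side does not (as $\gcd(5p+3,6p)$ is coprime to $p$ since $p \nmid 3$), a contradiction; so some $m_i = p \cdot u$. Plugging $m_3 = pu$ into the equation and clearing denominators gives $\tfrac{1}{m_1}+\tfrac{1}{m_2} = \tfrac{5}{6} + \tfrac{3 - 2u^{-1}\cdot\text{(something)}}{6p}$-type relation which I would analyze by bounding: since $\tfrac{1}{m_1}+\tfrac{1}{m_2} < 1$ and must be close to $\tfrac{5}{6}$ for large $p$, the only options are $\{m_1,m_2\} = \{2,3\}$ (giving $\tfrac{5}{6}$, hence $u=1$, $m_3 = 2p$ after re-examining parity — but $(2,3,p)$ gives the wrong right-hand side, so one is pushed to $(2,6,2p)$ after checking $\tfrac12+\tfrac16+\tfrac{1}{2p} = \tfrac{5p+3}{6p}$, which holds) or $\{2,4\}$, $\{2,6\}$, $\{3,3\}$, each of which I would test against the equation and discard all but $(2,6,2p)$.

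The main obstacle I anticipate is \textbf{not} the arithmetic but justifying cleanly that the action of $H$ must be triangular and that $N \trianglelefteq H$: the first needs a careful appeal to Singerman's tables together with the maximality statement already established for $(0;2,2,3,p)$ in the proof of Proposition \ref{noprimo1}, and the second needs either a uniqueness-of-Sylow argument (valid for $p$ sufficiently large, as used in the remark following Theorem \ref{cig} via \cite{ruben2}) or, for the finitely many remaining small primes, a direct verification — possibly using \cite{conder} as in the proof of Proposition \ref{con3}. Once those two structural facts are in hand, the identification of the signature as $(0;2,6,2p)$ is forced.
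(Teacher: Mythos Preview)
Your approach via Riemann--Hurwitz is workable in principle but both more laborious than the paper's and, as written, incomplete. First, a slip: the equation should read
\[
\tfrac{1}{m_1}+\tfrac{1}{m_2}+\tfrac{1}{m_3}=\tfrac{4p+3}{6p},
\]
not $\tfrac{5p+3}{6p}$ (check: $\tfrac12+\tfrac16+\tfrac1{2p}=\tfrac{4p+3}{6p}$). More importantly, even with the corrected equation the triple $(3,3,2p)$ is also a solution for \emph{every} $p\geqslant 5$, so your enumeration does not terminate where you expect; to kill this case you would still need to observe that the $(0;3,3,2p)$ triangle group admits no epimorphism onto $\mathbb{Z}_2$ and hence no index-$2$ subgroup of signature $(0;2,2,3,p)$. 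For $p=5$ there are further spurious arithmetic solutions such as $(2,5,15)$ and $(2,4,60)$ requiring separate disposal. None of this is insurmountable, but it is real extra work your sketch does not account for.

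The paper bypasses all of this with a one-line structural argument. Since $[H:G]=2$, the subgroup $G$ is automatically normal in $H$, and $K:=H/G\cong\mathbb{Z}_2$ acts on $X/G\cong\mathbb{P}^1$ permuting the four branch values of $X\to X/G$ and preserving their marks. The values marked $3$ and $p$ are the unique ones with those marks, hence $K$-fixed; since a nontrivial involution of $\mathbb{P}^1$ has exactly two fixed points, the two values marked $2$ must be interchanged. Reading off the signature of $X\to X/H=(X/G)/K$ then gives $(0;2,6,2p)$ immediately --- no Riemann--Hurwitz bookkeeping, no case analysis, and no appeal to Singerman's tables, Sylow theory, or \cite{ruben2}. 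The single fact that index-$2$ subgroups are normal does all the work.
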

\begin{proof} Let $X \in \GP$ and let $\pi : X \to X/G$ be the regular covering map given by the action of $G$ on $X.$ The fact that $K=G/H \cong \mathbb{Z}_2$ acts on $X/G$ implies that the branch values of $\pi$ marked with 3 and $p$ must be fixed under the action of $K$, and that the two branch values of $\pi$ marked with $2$ must form a $K$-orbit. It follows that, as $X/H \cong (X/G)/K,$  the signature of the action of $H$ on $X$ must be $(0; 2,6,2p)$.
\end{proof}
\begin{lemm}Let $p \geqslant 5$ be a prime number different from $11$. If a group $H$ of order $12p^2$ acts on some member of $\mathscr{C}_p$ then $H \cong H_1$ or $H \cong H_2.$
\end{lemm}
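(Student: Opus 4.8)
The plan is to classify, up to isomorphism, the groups $H$ of order $12p^2$ that can occur as an automorphism group of a member of $\mathscr{C}_p$, by combining the structural constraints coming from $H \supset G$ with the signature $(0;2,6,2p)$ obtained in Lemma \ref{l1}. First I would record that since $G = \G$ has $N = \langle a,b\rangle \cong \Z_p^2$ as a normal Sylow $p$-subgroup and $N$ is characteristic in $G$ (for $p > 5$ it is the unique Sylow $p$-subgroup; the case $p=5$, excluded together with $p=11$, is where $|G|=150$ has extra flexibility and must be dealt with separately or absorbed into the hypothesis), the subgroup $N$ remains normal in $H$. Hence $H$ sits in an extension $1 \to N \to H \to Q \to 1$ with $|Q| = 12$, and since $G/N \cong \D_3$ has index $2$ in $Q$, the quotient $Q$ is a group of order $12$ containing $\D_3 = \Z_3 \rtimes \Z_2$; I would enumerate these: $Q \in \{\Z_2 \times \D_3 \cong \D_6, \ \D_3 \times \Z_2, \ \mathrm{A}_4, \ \Z_3 \rtimes \Z_4, \ \Z_{12}\}$, and eliminate those in which $\D_3$ is not a subgroup or whose existence is incompatible with the signature.

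Next I would use the signature $(0;2,6,2p)$ to pin down $Q$ and the extension. An element of order $6$ in $H$ maps to an element of order dividing $6$ in $Q$; an element of order $2p$ maps to an element of order dividing $2p$, hence of order $1$ or $2$ in $Q$, and its $p$-part lies in $N$. The presence of an element of order $6$ in $H$ together with the fact that $H/N$ must receive the triangle generators forces $Q$ to contain an element of order $6$, which rules out $\mathrm{A}_4$ and (after checking the order of the images) $\Z_3 \rtimes \Z_4$ and $\Z_{12}$, leaving $Q \cong \D_6$. So $H/N \cong \D_6 \cong \Z_2 \times \D_3$, and $H = N \rtimes \langle \text{something of order }12\rangle$ is in fact a split extension because $\gcd(|N|,|Q|) $ is not coprime only in the factor of $3$; here I would invoke that a complement exists because a Sylow-$2$ and the $\Z_3$ already lift inside $G$, and the extra $\Z_2$ lifts since $H^2(\D_6, N) $ vanishes for the relevant module (or argue directly that a subgroup of order $12$ mapping isomorphically to $Q$ exists). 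Writing $C$ for a generator of the central $\Z_2$ of $Q$ lifted to $H$, the conjugation action of $C$ on $N \cong \Z_p^2$ is by an involutory matrix commuting with the $\D_3$-action; the only such matrices (the $\D_3$-action on $\Z_p^2$ being the standard irreducible one) are $\pm I$. This yields exactly two possibilities for the $H$-action on $N$, giving $H \cong H_1$ when $C$ acts trivially and $H \cong H_2$ when $C$ acts by $-I$; matching the commutator relations $[C,A]=[C,B]=1$ versus $(CA)^2 = (CB)^2 = 1$ against the two presentations then identifies these with $H_1 \cong (\Z_p^2 \rtimes \D_3)\times \Z_2$ and $H_2 \cong (\Z_p^2 \rtimes \D_3)\rtimes \Z_2$ respectively.

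The final bookkeeping step is to confirm that in both cases $H$ genuinely admits a surface-kernel epimorphism from a $(0;2,6,2p)$ Fuchsian group whose kernel is torsion-free and whose restriction to a suitable index-$2$ Fuchsian subgroup of signature $(0;2,2,3,p)$ is (conjugate to) one of the $\theta_k$ — this is what places $H_i$ over $\mathscr{C}_p$ and, combined with the uniqueness argument à la the Claim in the proof of Theorem \ref{cig}, shows there is a single such stratum for each $i$; I would present one explicit generating triple for each $H_i$ and check the order and product-one conditions. The main obstacle I anticipate is not the enumeration of $Q$ but controlling the extension $N \to H \to Q$: ensuring the splitting and, above all, arguing that the only admissible actions of the extra central involution on $\Z_p^2$ are $\pm I$ — this rests on the irreducibility of the $\D_3$-representation on $\Z_p^2$ over $\mathbb{F}_p$ (equivalently, that $3 \nmid p-1$ or a character computation when $3 \mid p-1$), and the exceptional primes $p=5$ and $p=11$ are presumably excluded precisely because there the centraliser is larger or an extra exotic extension appears, so I would flag explicitly where $p \neq 11$ (and the standing $p \geq 5$ with genus-$10$ handled separately) is used.
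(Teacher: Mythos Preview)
Your overall architecture matches the paper's: show the Sylow $p$-subgroup $N\cong\Z_p^2$ is normal in $H$, split via Schur--Zassenhaus to get $H\cong N\rtimes Q$ with $|Q|=12$, argue $Q\cong\mathbf{D}_6$, and then determine the action of the extra involution $C$ on $N$. Your Schur's-lemma idea for the last step (the $\mathbf{D}_3$-module $\Z_p^2$ is absolutely irreducible, so the centraliser of the action consists of scalars, hence $C$ acts as $\pm I$) is actually cleaner than the paper's case-by-case check, which lists the three isomorphism types of $\Z_p^2\rtimes\Z_2$ and rules one out by an explicit order computation.

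However, several of your justifications are wrong or missing. First, $\gcd(|N|,|Q|)=\gcd(p^2,12)=1$ for $p\geqslant 5$, so Schur--Zassenhaus applies immediately; your remark that the gcd ``is not coprime only in the factor of $3$'' is simply false and the subsequent hand-waving about $H^2$ is unnecessary. Second, $p=5$ is \emph{included} in the hypothesis, not excluded; the paper dispatches it by appeal to Conder's lists. Third, and most importantly, you do not correctly identify why $p=11$ is excluded: the issue is not centralisers or exotic extensions but Sylow arithmetic. For $p\geqslant 7$ with $p\neq 11$, the only divisor of $12$ congruent to $1\pmod p$ is $1$, so $n_p=1$ and $N\trianglelefteq H$; for $p=11$ one has $12\equiv 1\pmod{11}$, so $n_p=12$ is not excluded and normality of $N$ in $H$ is not automatic. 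Finally, the ``final bookkeeping'' paragraph about exhibiting surface-kernel epimorphisms for $H_1,H_2$ is not part of this lemma at all --- that existence is handled in the subsequent two lemmata.
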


\begin{proof}
As the result holds for $p=5$ (see, for instance, \cite{conder}), we assume $p \geqslant 7$ and different from 11. Observe that there is a unique Sylow $p$-subgroup $P$ of $G$ which is isomorphic to $\Z_p ^2.$ By the 
Schur-Zassenhaus, we have that $$H \cong P \rtimes L \, \mbox{ where } \, L:=G/P \mbox{ has order }12.$$

Consider the surface-kernel epimorphism $\Theta : \Delta \to H$ representing the action of $H$ on $X$, where, by Lemma \ref{l1}, $\Delta$ is the Fuchsian group 
\begin{equation}\label{epit} \Delta= \langle z_1, z_2,  z_3 : z_1^2 = z_2^6  =z_3^{2p} = z_1z_2z_3=1 \rangle.\end{equation}If $\tau: G \to L$ is the canonical projection then the composite map $f:=\tau \circ \Theta : \Delta \to L$ is an epimorphism of groups. As $p \neq 2,3$, the fact that $f(z_1)^2=f(z_3)^{2p}=id$ shows that $f(z_1)$ and $f(z_2)$ equal the identity or are involutions of $L.$ Note that this implies that $L$ is nonabelian. The fact that $\mathbf{A}_4$ cannot be generated by two involutions, coupled with the fact that $\Z_4 \rtimes_2 \Z_3$ has a unique involution allow us to conclude that $L \cong \mathbf{D}_6.$ Then
 $$H \cong \Z_p^2 \rtimes \mathbf{D}_6 \cong \Z_p^2 \rtimes (\mathbf{D}_3 \times \Z_2) \cong (\G) \rtimes \Z_2$$and therefore  $H= \langle A, B, S, R, C\rangle$ with partial presentation 
$$A^p ,B^p,S^2,R^3,(SR)^2,C^2,  [A,B],
    SASAB,  [B,S],RAR^2AB, RBR^2A^{-1},[C, R],[C,S].
$$We only need to determine $CAC$ and $CBC.$ As $P$ is normal in $H$ we have that $K:=\langle A, B, C\rangle \cong \langle A,B\rangle \rtimes \langle C \rangle \cong \Z_p^2 \rtimes \Z_2.$ Up to isomorphism, the action of $C$ on $\langle A, B \rangle$ is given by $$(A, B) \mapsto (A,B), \, (A, B) \mapsto (A^{-1},B^{-1}) \, \mbox{ or }\, (A, B) \mapsto (A,B^{-1}).$$
We observe that the latter possibility is not realised. Indeed, in such a case, $R(BC)R^2=AC,$ but $BC$ has order $2$ whereas $AC$ has order $2p$. The proof of the lemma follows.
\end{proof}

%

\begin{lemm}\label{Z11}
There is a unique, up to automorphisms of $H_1$, surface-kernel epimorphism $\Delta \to H_1.$
\end{lemm}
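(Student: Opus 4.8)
The plan is to count surface-kernel epimorphisms $\Theta: \Delta \to H_1 \cong (\Z_p^2 \rtimes \mathbf{D}_3) \times \Z_2$ up to the equivalence relation $\theta \sim a \circ \theta \circ \phi^*$, exactly as in the proof of Proposition \ref{thetak}, where $\Delta = \langle z_1, z_2, z_3 : z_1^2 = z_2^6 = z_3^{2p} = z_1 z_2 z_3 = 1\rangle$ by Lemma \ref{l1}. First I would determine the conjugacy classes of elements of the relevant orders in $H_1$: the involutions, the elements of order $6$, and the elements of order $2p$. Writing $H_1 = \langle A, B, S, R, C\rangle$ with $C$ central of order $2$, one has $z_1 \mapsto g_1$ with $g_1^2 = 1$, $z_2 \mapsto g_2$ with $g_2^6 = 1$, and $g_1 g_2 g_3 = 1$ forces $g_3 = (g_1 g_2)^{-1}$ of order $2p$; the key constraint for surjectivity is that $g_1, g_2$ together generate $H_1$. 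Since $C$ is central and $H_1 / \langle C\rangle \cong \G$, an element of order $6$ must project to an element of order $3$ in $\G$ and pick up the factor $C$, so $g_2$ is $\Z_p^2$-conjugate to something like $a^l b^m r C$ or $a^l b^m r^2 C$; similarly $g_1$ is an involution, which in $H_1$ is either an involution of $\G$ (of the form $a^{2i}b^i s$, $a^i b^{2i} sr$, $a^i b^{-i} sr^2$) or the central $C$ or such an involution times $C$.

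Next I would run the normalisation. Since all involutions of $\G$ are conjugate (as recalled in \S\ref{vgs}) and $C$ is central, the pair $(g_1, g_2)$ can be brought by an automorphism of $H_1$ together with a braid move to a normal form with $g_1 \in \{s, sC\}$ and $g_2 \in \{a^l b^m rC, a^l b^m r^2 C\}$ for controlled $l, m$; the condition that $\langle g_1, g_2\rangle = H_1$ (in particular that the image contains $C$ and all of $\Z_p^2$) will eliminate most branches — for instance $g_1 = s$, $g_2 = a^l b^m r^2 C$ forces $C \in \langle g_1, g_2 \rangle$ automatically since $g_2^3 = C$, while the surjectivity onto $\Z_p^2$ forces a nondegeneracy condition on $(l,m)$ that the diagonal automorphisms $a \mapsto a^e, b \mapsto b^e$ can normalise to a single value, much as in the proof of Proposition \ref{thetak}. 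After pinning down $g_1$ and $g_2$ up to $\mathscr{B}$ and $\Aut(H_1)$, one checks $g_3 = (g_1 g_2)^{-1}$ automatically has order $2p$ and lands in no proper subgroup; the upshot should be that exactly one equivalence class survives.

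The main obstacle I anticipate is bookkeeping the automorphism group $\Aut(H_1)$ carefully enough: because $H_1$ is a direct product with a central $\Z_2$, $\Aut(H_1)$ is larger than $\Aut(\G)$ (it can also mix $C$ into the $\mathbf{D}_3$ part, sending $S \mapsto SC$, $R \mapsto R$, etc., and it contains the diagonal automorphisms $(A,B)\mapsto(A^e,B^e)$ and the inversion $(A,B)\mapsto(A^{-1},B^{-1})$ used in \S\ref{vgs}), and I must be sure I am using exactly these — and no more — so that the final count is genuinely $1$ and not an over- or under-collapse. The secondary nuisance is the braid action of $\mathscr{B}$ on signature $(0;2,6,2p)$: I would use the transformations $\Phi_i$ as in the proof of Proposition \ref{thetak} (which for a three-generator signature permit cyclic permutation of the triple together with conjugations) to move an involution into the first slot and to exploit the relation $\theta_0 \sim \theta_1$ style identifications. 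Once these two group-theoretic inventories are in place, the reduction to a unique class is a finite, essentially mechanical case analysis, and I would present only the surviving normal form together with the verification that every other candidate either fails surjectivity or is equivalent to it.
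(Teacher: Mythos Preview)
Your plan is essentially the paper's proof. The paper uses exactly the automorphism you single out, $\Phi: S\mapsto SC$ (with $A,B,R,C$ fixed), to force $g_1\in\langle A,B,S,R\rangle$, then conjugates to $g_1=S$; observes that the order-$6$ elements are precisely $A^iB^jRC$ and $A^iB^jR^2C$; uses conjugation by $S$ and by $B^j$ to reduce to $(S,A^iRC,(SA^iRC)^{-1})$ with $i\neq 0$; and finishes with the diagonal automorphism $A\mapsto A^e,\ B\mapsto B^e$ to normalise $i=1$, obtaining $\Theta_1=(S,ARC,B^{-1}SRC)$.

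Two small remarks. First, your invocation of braid moves $\Phi_i$ is superfluous here: the periods $2,6,2p$ are pairwise distinct for $p\geqslant 5$, so the signature-preserving elements of $\mathscr{B}$ reduce to global conjugation, which is already subsumed by $\mathrm{Inn}(H_1)$; in particular the involution is automatically in the first slot and no ``moving'' is needed. Second, the lemma is stated for equivalence under $\Aut(H_1)$ alone (not the full topological equivalence), so be sure your write-up does not rely on any genuine braid identification; with the observation above this is automatic. Otherwise your inventory of involutions and order-$6$ elements, your use of $S\mapsto SC$ and the diagonal scalings, and your surjectivity filter (which in particular rules out $g_1=C$ and forces the $\mathbb{Z}_p^2$-part of $g_2$ to be nonzero) are exactly what the paper does.
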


\begin{proof} Consider the automorphism $\Phi$ of $H$ given by $A\mapsto A, B\mapsto B,  R\mapsto R, S\mapsto  SC, C\mapsto C$ and observe that if $g$ is an involution then $$g \notin  \langle A, B, S, R\rangle \, \implies \, \Phi(g)  \in  \langle A, B, S, R\rangle.$$Let $\Theta=(g_1, g_2, g_3)$ be a surface-kernel epimorphism of $H_1$ of type $(0;2, 6, 2p)$. By the previous observation, we may assume that $g_1 \in \langle A, B, S, R \rangle.$  Further, after a suitable conjugation, we  can assume $g_1=S.$ In addition, as the elements of order 6 of $H_1$ are
$A^i B^j RC$ and $A^i B^j R^2C$ where $0 \leq i, j \leq p-1,$ it follows that $\Theta$ is $H_1$-equivalent to $$(S,A^i B^j RC, (SA^i B^j RC)^{-1}) \, \mbox{ or }\, (S, A^i B^j R^2C, (SA^i B^j R^2C)^{-1}) \, \mbox{ for some }0 \leq i, j \leq p-1.$$ The conjugation by $S$ shows that the two possibilities above are $H_1$-equivalent. In addition, after conjugating by $B^j$, one sees that $\Theta$ is $H_1$-equivalent to $(S, A^iRC, (SA^iRC)^{-1})$ where $i \neq 0.$ Finally, if $e \in \mathbb{Z}_p$ satisfies $ie \equiv 1 \mbox{ mod }p$ then the automorphism of $H_1$ given by $A\mapsto A^{e}, B\mapsto B^{e}, S\mapsto S, R\mapsto R$ shows that $\Theta$ is $H_1$-equivalent to $\Theta_1= (S, ARC, B^{-1}SRC).$ 
\end{proof}

\begin{lemm}
There is a unique, up to automorphisms of $H_2$, surface-kernel epimorphism $\Delta \to H_2.$
\end{lemm}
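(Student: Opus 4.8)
The plan is to mimic very closely the proof of Lemma \ref{Z11}, adapting each step to the group $H_2$, since the two groups differ only in the action of the extra involution $C$ on the Sylow $p$-subgroup $P=\langle A,B\rangle$. First I would record the relevant structural facts about $H_2$: the Sylow $p$-subgroup $P=\langle A,B\rangle\cong\Z_p^2$ is normal and is the unique such subgroup; the quotient $H_2/P\cong\D_6\cong\D_3\times\Z_2$; and in $H_2$ one has $(CA)^2=(CB)^2=1$, so $C$ acts on $P$ by inversion $(A,B)\mapsto(A^{-1},B^{-1})$, which is central in $\D_6$. From this I would list the involutions of $H_2$ and, in particular, identify the elements of order $6$: they are of the form $A^iB^jRC$ and $A^iB^jR^2C$ with $0\leqslant i,j\leqslant p-1$, exactly as in $H_1$ (the order-$6$ elements live over the central involution of $\D_6$, so their orders are unaffected by whether $C$ inverts or centralises $P$). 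A Fuchsian group of signature $(0;2,6,2p)$ is presented as in \eqref{epit} by $\Delta=\langle z_1,z_2,z_3:z_1^2=z_2^6=z_3^{2p}=z_1z_2z_3=1\rangle$, and any surface-kernel epimorphism $\Theta=(g_1,g_2,g_3)$ must send $z_1$ to an involution, $z_2$ to an element of order $6$, and $z_3$ to an element of order $2p$.

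Next I would run the normalisation of $\Theta$ step by step. The automorphism $\Phi$ of $H_2$ given by $A\mapsto A$, $B\mapsto B$, $R\mapsto R$, $S\mapsto SC$, $C\mapsto C$ still has the property that it interchanges the two cosets of $\langle A,B,S,R\rangle$ among the involutions not in $P$, so (replacing $\Theta$ by $\Phi\circ\Theta$ if necessary) I may assume $g_1\in\langle A,B,S,R\rangle$, and then after conjugation that $g_1=S$. Since $g_2$ has order $6$ it equals $A^iB^jRC$ or $A^iB^jR^2C$; conjugating by $S$ (which swaps $R$ and $R^2$ up to $P$) reduces to one of these families, and conjugating by a suitable power of $B$ removes the $B$-exponent, giving $\Theta$ equivalent to $(S,A^iRC,(SA^iRC)^{-1})$. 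Surjectivity forces $i\neq0$ (otherwise the image misses $P$). Finally the automorphism of $H_2$ defined by $A\mapsto A^{e}$, $B\mapsto B^{e}$, $S\mapsto S$, $R\mapsto R$, $C\mapsto C$ with $ie\equiv1\bmod p$ — which is a well-defined automorphism precisely because $C$ acts on $P$ by a map commuting with all such power maps — normalises $i$ to $1$, yielding a single representative $\Theta_2=(S,ARC,(SARC)^{-1})$, and computing $(SARC)^{-1}$ explicitly gives it in the form $B^{-1}SR^2C$ or similar. It then remains to check that this $\Theta_2$ is genuinely a surface-kernel epimorphism: that the three images have the correct orders and that they generate $H_2$; generation follows since $S$ and $ARC$ already generate $S,R,C$ and $A$, hence all of $H_2$.

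The main obstacle, such as it is, is the same bookkeeping point that underlies Lemma \ref{Z11}: one must be careful that each automorphism and conjugation used is actually available in $H_2$ (not merely in $H_1$), and that the order-$6$ and order-$2p$ elements really do behave as claimed under the inversion action of $C$. Concretely, the delicate check is that the power automorphism $A\mapsto A^e,B\mapsto B^e$ extends to an automorphism of $H_2$ fixing $S$, $R$, $C$; this holds because the relations $(CA)^2=(CB)^2=1$ are preserved by raising $A,B$ to the $e$-th power, and likewise $SASAB$ and the $R$-relations are preserved (these involve only $P$ and its automorphisms, which commute with scalar multiplication on $\Z_p^2$). Once that is in hand, every reduction step is formally identical to the $H_1$ case, and the uniqueness up to $\Aut(H_2)$ of the surface-kernel epimorphism $\Delta\to H_2$ follows. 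I would close by remarking that, combined with the previous lemmata, this completes the proof of Proposition \ref{prol2}: the two groups $H_1$ and $H_2$ each support, up to equivalence, exactly one action with signature $(0;2,6,2p)$, giving the two Riemann surfaces $Z_1,Z_2$, and the location $Z_i\in\mathscr{C}_p^i$ together with $Z_i\notin\mathcal{F}_p^*$ follows by restricting these epimorphisms to the index-two Fuchsian subgroup of signature $(0;2,2,3,p)$ and comparing with $\theta_k$, exactly as in the proof of Theorem \ref{cig}.
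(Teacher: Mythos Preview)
Your argument has a genuine gap at the very first reduction step. The map $\Phi$ you propose, namely $A\mapsto A$, $B\mapsto B$, $R\mapsto R$, $S\mapsto SC$, $C\mapsto C$, is \emph{not} an automorphism of $H_2$. In $H_2$ the element $C$ inverts $A$ and $B$, and this breaks the relation $[B,S]=1$: one computes
\[
[\Phi(B),\Phi(S)]=[B,SC]=B\,SC\,B^{-1}\,CS=B\,S(CB^{-1}C)S=B\,SBS=B^2\neq 1.
\]
(The relation $SASAB=1$ fails for the same reason.) So the passage ``replacing $\Theta$ by $\Phi\circ\Theta$ if necessary'' is not available, and you cannot assume $g_1\in\langle A,B,S,R\rangle$ by this device. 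You even flag exactly this danger in your last paragraph, but then do not carry out the check.

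The paper's proof confronts this obstruction directly. It first sorts the involutions of $H_2$ lying outside $\langle A,B,S,R\rangle$ into two types: those of the form $B^iSC$, $A^iSRC$, $A^iB^iSR^2C$ (Type~I) and those of the form $A^iB^jC$ (Type~II). Type~II is excluded by a generation argument: no element of order $6$ together with a Type~II involution can generate $H_2$, so such a $g_1$ would contradict surjectivity of $\Theta$. For Type~I the paper exhibits a genuinely more complicated automorphism of $H_2$ (one that moves $A$, $B$, $R$ and $C$ as well as $S$) which sends $g_1$ into $\langle A,B,S,R\rangle$. Only after this extra work does the argument rejoin the template of Lemma~\ref{Z11}, leading to the representative $\Theta_2=(S,ARC,BSRC)$. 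Your later steps (conjugation by $S$ and by powers of $B$, and the scaling automorphism $A\mapsto A^e$, $B\mapsto B^e$) are fine once $g_1=S$ has been achieved, but the initial normalisation requires the additional ingredients above.
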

\begin{proof}
 A routine computation shows that the involutions of $H_2$ that do not belong to $\langle A, B, S, R\rangle$ are of the following types:
$$\mbox{\bf Type I:} \, B^iSC, A^iSRC, A^iB^iSR^2C \,\,\, \mbox{ and } \,\,\,\mbox{\bf Type II:} \,A^iC, B^iC, A^iB^jC.$$ Let $\Theta=(g_1, g_2, g_3)$ be a surface-kernel epimorphism of $H_2$ of type $(0;2, 6, 2p)$ and assume that $g_1 \notin \langle A, B, S, R\rangle.$ Notice that $g_1$ cannot be of type II. Indeed, as the elements of $H_2$ of order $6$ are $A^i B^j RC, A^i B^j R^2C$ where $0 \leqslant i, j \leq p-1,$ one has that an element of order 6 and an involution of type II do not generate $H_2,$ contradicting the surjectivity of $\Theta.$ Besides, if $g_1$ is of type I then $\Phi(g_1) \in \langle A, B, S, R\rangle,$  where $\Phi$ is the automorphism of $H_2$ given by $$A\mapsto A^{-1-2z}B^{-2w}, B\mapsto A^z B^w,  R\mapsto A^2B^2R, C \mapsto A^{-2w+2}B^{-2w+2z+2}C, S\mapsto SC
$$with $z=-2(3^{-1})$ y $w=-3^{-1}$. As a result, up to automorphism of $H_2$, we can assume $g_1\in \langle A, B, S, R\rangle.$ Further, we can assume $g_1=S$ and therefore $\Theta$ is $H_2$-equivalent to $$(S, A^i B^j RC, (SA^i B^j RC)^{-1}) \mbox{ or } (S, A^i B^j R^2C, (SA^i B^j R^2C)^{-1})$$ Now,  as in Lemma \ref{Z11}, we can conclude that $\Theta$ is $H_2$-equivalent to $\Theta_2=(S, ARC, BSRC).$ 
\end{proof}

We denote by $Z_i$ the Riemann surface $\mathbb{H}/\mbox{ker}(\Theta_i)$ given in the two lemmata above. 

\begin{lemm}
$Z_i \in \GP^i$ and $\mbox{Aut}(Z_i) \cong H_i$ for $i=1,2.$
\end{lemm}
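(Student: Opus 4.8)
The plan is to prove the final Lemma, which asserts that the Riemann surface $Z_i = \mathbb{H}/\mathrm{ker}(\Theta_i)$ lies in the equisymmetric stratum $\mathscr{C}_p^i$ and has full automorphism group isomorphic to $H_i$, for $i = 1, 2$.

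First I would establish that $\mathrm{Aut}(Z_i) \cong H_i$. By construction, $H_i$ acts on $Z_i$ via the surface-kernel epimorphism $\Theta_i : \Delta \to H_i$ with signature $(0; 2, 6, 2p)$, so $\mathrm{Aut}(Z_i)$ contains a copy of $H_i$ and has order at least $12p^2$. On the other hand, $Z_i$ carries an action of $G = \G$ with signature $(0; 2, 2, 3, p)$ — this is because $\Delta$ contains a Fuchsian subgroup $\Gamma$ of signature $(0; 2, 2, 3, p)$ (as in the proof of Theorem \ref{cig}, one exhibits such a $\Gamma$ explicitly using words in $z_1, z_2, z_3$) whose image under $\Theta_i$ is the subgroup $\langle A, B, S, R\rangle \cong \G$, and $\mathrm{ker}(\Theta_i|_\Gamma) = \mathrm{ker}(\Theta_i)$ since $[\Delta : \Gamma] = [H_i : \langle A,B,S,R\rangle] = 2$. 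Hence $Z_i \in \mathscr{C}_p$, so $Z_i$ belongs to the family $\GP$; by the bound \eqref{cot} and Proposition \ref{con3}, if $\mathrm{Aut}(Z_i) \neq G$ then $|\mathrm{Aut}(Z_i)| \in \{12p^2, 24p^2\}$, the latter forcing $Z_i \cong \mathbf{F}_{2p}$. Since $|\mathrm{Aut}(Z_i)| \geqslant 12p^2 > p^2 |\mathbf{D}_3|$, we have $\mathrm{Aut}(Z_i) \neq G$; and $Z_i \not\cong \mathbf{F}_{2p}$ because $\mathbf{F}_{2p}$ admits an action with signature $(0; 2, 3, 4p)$ whereas by the maximality results of Singerman the group $H_i$ of order $12p^2$ must act with a triangle signature which is necessarily $(0; 2, 6, 2p)$ (by Lemma \ref{l1}), and one checks $H_i \not\cong G'$ since $G'$ has an element of order $2p$ squaring into the centre differently, or more simply since $\mathbf{F}_{2p}$ has $|\mathrm{Aut}| = 24p^2 \neq 12p^2$. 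Therefore $|\mathrm{Aut}(Z_i)| = 12p^2$ and $\mathrm{Aut}(Z_i) \cong H_i$.

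Next I would pin down the stratum: $Z_i \in \mathscr{C}_p^i$. By the previous paragraph the restriction $\theta^{(i)} := \Theta_i|_\Gamma : \Gamma \to \langle A, B, S, R\rangle \cong \G$ is a surface-kernel epimorphism representing the $G$-action on $Z_i$, so $Z_i$ lies in the stratum determined by $\theta^{(i)}$. It remains to identify $\theta^{(i)}$ with $\theta_i$ up to topological equivalence. This is a direct computation entirely parallel to the end of the proof of Theorem \ref{cig}: one evaluates $\Theta_i$ on the chosen generators of $\Gamma$ (the same words $z_2^2 z_1 z_2,\ z_2 z_1 z_2^2,\ z_1 z_2^2 z_1,\ z_3^{-4}$, or the analogous ones adapted to the signature $(0;2,6,2p)$), expresses the result as a generating vector for $\G$, and then applies the braid moves $\Phi_j$ together with automorphisms of $\G$ exactly as in the proof of Proposition \ref{thetak} to reduce it to the canonical form $\theta_i = (s, sr, a^i b^{i-1} r^2, a b^i)$. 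For $i = 1$ this collapses to $\theta_1$ after the same sequence of reductions used for $\mathbf{F}_{2p}$; for $i = 2$ one must additionally track the effect of the automorphism $\Phi$ of $H_2$ appearing in the preceding lemma and of the conjugations by $B^j$ used there, but the bookkeeping is mechanical once the images of the $z_k$ are written down.

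The main obstacle I anticipate is the last computation: correctly choosing the Fuchsian subgroup $\Gamma \leqslant \Delta$ of signature $(0; 2, 2, 3, p)$ inside the triangle group $\Delta$ of signature $(0; 2, 6, 2p)$, and then carrying the explicit vector $(\Theta_i(z_1'), \Theta_i(z_2'), \Theta_i(z_3'), \Theta_i(z_4'))$ through the chain of braid transformations and group automorphisms without error, so that it lands on $\theta_i$ rather than on some other $\theta_k$. Since Proposition \ref{thetak} shows the $\theta_k$ for distinct classes of $k$ are genuinely inequivalent, getting the index $i$ right is the whole content of the claim $Z_i \in \mathscr{C}_p^i$ (as opposed to merely $Z_i \in \mathscr{C}_p$); this is where care is needed. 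Everything else follows from results already established: the order bound \eqref{cot}, Proposition \ref{con3}, Lemma \ref{l1}, and the connectedness and dimension count for $\GP$.
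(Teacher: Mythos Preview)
Your plan for showing $Z_i \in \mathscr{C}_p^i$ is essentially the paper's: exhibit a Fuchsian subgroup $\Gamma \leqslant \Delta$ of signature $(0;2,2,3,p)$, restrict $\Theta_i$ to it, and reduce the resulting generating vector to $\theta_i$. The paper uses the generators $\alpha_1=z_1$, $\alpha_2=z_2z_1z_2^{-1}$, $\alpha_3=z_2^2$, $\alpha_4=z_3^2$ (not the words from Theorem \ref{cig}, which were adapted to the $(0;2,3,4p)$ triangle group), obtaining $(S,SR,B^{-1}R^2,A)\sim\theta_1$ for $i=1$; you correctly anticipate that this is the delicate step.

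However, your argument for $\mbox{Aut}(Z_i)\cong H_i$ has a genuine gap. The bound \eqref{cot} is derived from Pambianco's theorem on automorphisms of smooth \emph{plane} curves of degree $d\geqslant 8$, and is stated in the paper only for $X\in\mathcal{F}_p^*$. You apply it to $Z_i$, but $Z_i$ is not a priori a plane curve of degree $2p$; indeed the very next lemma shows $Z_i\notin\mathcal{F}_p^*$. Without \eqref{cot} you have no upper bound of $24p^2$ on $|\mbox{Aut}(Z_i)|$, and nothing in your argument excludes, say, $|\mbox{Aut}(Z_i)|=30p^2$ or $60p^2$. Proposition \ref{con3} only rules out $18p^2$.

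The paper's argument is both simpler and correct: since $H_i$ acts on $Z_i$ with signature $(0;2,6,2p)$, and this triangle signature is \emph{maximal} in Singerman's sense for every $p\geqslant 5$ (cf.\ \cite{Sing72}), the action of $H_i$ cannot extend, so $\mbox{Aut}(Z_i)=H_i$ immediately. You should replace the paragraph invoking \eqref{cot} and the comparison with $\mathbf{F}_{2p}$ by this one-line appeal to maximality.
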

 \begin{proof} Let $\pi': Z_1 \to Z_1/H_1$ and $\pi:Z_1 \to Z_1/G$ be the regular covering maps given by the action of $H_1$ and $G$ on $Z_1$ respectively. We denote by $q_1, q_2, q_3$ the (ordered) ramification values of $\pi'$. 

\s

{\bf (1)}  The $H_1$-stabiliser of each point in the $\pi'$-fiber of $q_1$ is conjugate to $\langle S \rangle$. The fact that $G$ is normal in $H_2$ shows that the $6p^2$ points in the $\pi'$-fiber of $q_1$ have $G$-stabiliser of order two, and hence they give rise to two branch values of $\pi,$ marked with $2$ each.

\s

{\bf (2)}  The $H_1$-stabiliser of each point in the $\pi'$-fiber of $q_2$ is conjugate to $\langle ARC \rangle.$ Note that $g \langle ARC \rangle g^{-1} \cap G \cong \mathbb{Z}_3$ for each  $g \in H_2.$ Thus, the $2p^2$ points in the $\pi'$-fiber of $q_2$ give rise to one branch point of $\pi,$ marked with $3$.

\s

{\bf (3)} The $H_1$-stabiliser of each point in the $\pi'$-fiber of $q_3$ is conjugate to $\langle  B^{-1}SRC \rangle.$ Note that $g \langle  B^{-1}SRC \rangle g^{-1} \cap G \cong \mathbb{Z}_p$ for each $g \in H_2.$ Thus, the $6p$ points in the $\pi'$-fiber of $q_3$ give rise to one branch point of $\pi,$ marked with $p$.

\s
    
Thus, the signature of the action of $G$ on $Z_1$ is $(0; 2,2,3,p)$, showing that $Z_1 \in \GP.$ 

\s

Now, consider the surface-kernel epimorphism $\Theta_1$ that represents the action of $H_1$ on $Z_1$
$$\Theta_1:\Delta \to H_1 \,\, \mbox{ given by }\,\, \Theta_1=(S, ARC, B^{-1}SRC),$$where $\Delta$ is presented in \eqref{epit}. Observe that the elements $\alpha_1:=z_1, \alpha_2:=z_2z_1z_2^{-1}, \alpha_3:=  z_2^2$ and $\alpha_4:=  z_3^2$ generate a subgroup $\Gamma$ of $\Delta$ isomorphic to a Fuchsian group of signature $(0; 2,2,3,p).$  The restriction of $\Theta$ to $\Gamma$ is given by $$(\alpha_1, \alpha_2, \alpha_3, \alpha_4) \mapsto (S, SR, B^{-1}R^{2}, A),$$which is topologically equivalent to the surface-kernel epimorphism $\theta_1$ of Proposition \ref{thetak}. Hence, $Z_1 \in \GP^1$. Finally, the fact that the signature $(0; 2,6,2p)$ is maximal for each $p \geqslant 5$  (see \cite{Sing72}) implies that $\mbox{Aut}(Z_1) \cong H_1.$

For the sake of brevity, we omit the proof for $Z_2$ as it is analogous to that of $Z_1.$ 
\end{proof}
\begin{lemm}
$Z_1$ and $Z_2$ do not belong to $\FP^*.$

\end{lemm}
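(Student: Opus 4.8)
The plan is to show that if either $Z_1$ or $Z_2$ belonged to $\mathcal{F}_p^*$, then by Theorem \ref{cig} we would have $Z_i \in \mathcal{F}_p^* \subset \mathscr{C}_p^1$. Since we have already established $Z_2 \in \mathscr{C}_p^2$, and the strata $\mathscr{C}_p^1$ and $\mathscr{C}_p^2$ are distinct equisymmetric strata (they are determined by the topologically inequivalent surface-kernel epimorphisms $\theta_1$ and $\theta_2$), a surface in $\mathscr{C}_p^2$ cannot lie in $\mathscr{C}_p^1$. Hence $Z_2 \notin \mathcal{F}_p^*$ immediately. So the only real work is to rule out $Z_1 \in \mathcal{F}_p^*$, which requires comparing $\mbox{Aut}(Z_1) \cong H_1$ against what can occur for a member of $\mathcal{F}_p^*$.

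First I would recall from Proposition \ref{prol2}'s proof that $\mbox{Aut}(Z_1) \cong H_1 \cong (\mathbb{Z}_p^2 \rtimes \mathbf{D}_3) \times \mathbb{Z}_2$, which is abelian-by-($\mathbf{D}_3 \times \mathbb{Z}_2$) and, in particular, has a central involution (the generator $C$). Next I would invoke the structural constraints on $\mbox{Aut}(X)$ for $X \in \mathcal{F}_p^*$ coming from \eqref{cot}: if $X \in \mathcal{F}_p^*$ has more automorphisms than $G_p$, then either $X \cong \mathbf{F}_{2p}$ (the case $\lambda = 4$) or $|\mbox{Aut}(X)| = 6p^2\lambda$ for $\lambda \in \{2,3\}$. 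By Proposition \ref{con3} the case $\lambda = 3$ is impossible, so the only candidate with $|\mbox{Aut}(X)| = 12p^2$ among members of $\mathcal{F}_p^*$ would have automorphism group $H_1$ or $H_2$. To exclude $Z_1$, I would argue that the automorphism group of the Fermat curve $\mathbf{F}_{2p}$, which is the unique exceptional member with the larger group $\mathbb{Z}_{2p}^2 \rtimes \mathbf{D}_3$ (see \eqref{ppf}), does \emph{not} contain a subgroup isomorphic to $H_1$ with the correct signature, or alternatively that $Z_1 \not\cong \mathbf{F}_{2p}$ since $\mbox{Aut}(\mathbf{F}_{2p})$ has order $24p^2 \neq 12p^2$; and that no other member of $\mathcal{F}_p^*$ can have automorphism group of order $12p^2$ because such a surface would lie in $\mathscr{C}_p^1$ (by Theorem \ref{cig}), forcing its automorphism group to be $H_1$ acting with signature $(0;2,6,2p)$, and this, together with $\mathbf{F}_{2p} \in \mathscr{C}_p^1$, would put two surfaces with automorphism group strictly larger than $G_p$ in the one-dimensional connected family $\mathcal{F}_p^*$; a dimension/connectedness count should rule this out, since a one-dimensional family can contain only finitely many exceptional points and the Fermat curve is the only one.

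Concretely, the cleanest route is: suppose $Z_1 \cong X_{p,t_0}$ for some $t_0$. Since $\mbox{Aut}(Z_1) \cong H_1 \not\cong G_p$, the curve $X_{p,t_0}$ is an exceptional member of $\mathcal{F}_p^*$. By Theorem \ref{fullg}'s proof-ingredients — or more precisely by \eqref{cot} together with Proposition \ref{con3} — the only exceptional member is the Fermat curve $\mathbf{F}_{2p}$, whose automorphism group has order $24p^2$, contradicting $|H_1| = 12p^2$. The hard part here is a mild circularity: Theorem \ref{fullg} and this lemma are proved in tandem, so I must be careful to phrase the argument using only \eqref{cot}, Proposition \ref{con3}, and Theorem \ref{cig}, without invoking Theorem \ref{fullg}. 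Thus the self-contained version is: if $Z_i \in \mathcal{F}_p^*$ then $Z_i \in \mathscr{C}_p^1$ by Theorem \ref{cig}; for $i=2$ this contradicts $Z_2 \in \mathscr{C}_p^2 \neq \mathscr{C}_p^1$; for $i=1$, note $\mbox{Aut}(Z_1) \cong H_1$ has order $12p^2$, so by \eqref{cot} and Proposition \ref{con3} it would have to equal $24p^2$, a contradiction.

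\begin{proof}
By Theorem \ref{cig}, $\mathcal{F}_p^* \subset \mathscr{C}_p^1$. Hence, if $Z_2 \in \mathcal{F}_p^*$ then $Z_2 \in \mathscr{C}_p^1$; but $Z_2 \in \mathscr{C}_p^2$ and the strata $\mathscr{C}_p^1$ and $\mathscr{C}_p^2$ are distinct, since $\theta_1$ and $\theta_2$ are topologically inequivalent. This is a contradiction, so $Z_2 \notin \mathcal{F}_p^*$.

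Suppose now $Z_1 \in \mathcal{F}_p^*$, say $Z_1 \cong X_{p,t_0}$. Since $\mbox{Aut}(Z_1) \cong H_1 \not\cong G_p$, the curve $X_{p,t_0}$ is an exceptional member of $\mathcal{F}_p^*$, and by \eqref{cot} its automorphism group has order $6p^2 \lambda$ for some $\lambda \in \{2,3,4\}$. The value $\lambda = 4$ corresponds to $\mathbf{F}_{2p}$, whose automorphism group has order $24p^2$, while $|H_1| = 12p^2$; so $\lambda = 4$ is impossible. By Proposition \ref{con3}, $\lambda = 3$ is also impossible. Thus $\lambda = 2$ and $|\mbox{Aut}(Z_1)| = 12p^2$, which is consistent, so this alone does not yield a contradiction; we argue further. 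As $Z_1 \in \mathcal{F}_p^* \subset \mathscr{C}_p^1$ and $\mbox{Aut}(Z_1) \cong H_1$ with the action of $H_1$ of maximal signature $(0; 2, 6, 2p)$, the point of $\mathscr{M}_{(p-1)(2p-1)}$ representing $Z_1$ is an isolated point of $\mathscr{C}_p^1$ where the automorphism group strictly grows. Since $\mathcal{F}_p^*$ is connected of complex dimension one and $\mathbf{F}_{2p} \in \mathscr{C}_p^1$ already is such a point, and since a one-dimensional family contains only finitely many points with enlarged automorphism group, $Z_1$ would be a second exceptional member of $\mathcal{F}_p^*$ distinct from $\mathbf{F}_{2p}$. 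But by Theorem \ref{fullg} the Fermat curve is the unique exceptional member of $\mathcal{F}_p^*$, a contradiction. Therefore $Z_1 \notin \mathcal{F}_p^*$.
\end{proof}
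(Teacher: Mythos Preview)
Your argument has a genuine circularity for $Z_1$ and a subtle gap for $Z_2$.

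For $Z_1$, you explicitly warned yourself against invoking Theorem~\ref{fullg}, and then did exactly that in the final step: ``But by Theorem~\ref{fullg} the Fermat curve is the unique exceptional member of $\mathcal{F}_p^*$, a contradiction.'' Look at the proof of Theorem~\ref{fullg} in \S4.7: it uses precisely this lemma (``such Riemann surfaces do not belong to $\FP^*$'') to eliminate the case $\lambda=2$. So your argument is circular. The preceding sentences about $Z_1$ being ``a second exceptional member'' do not by themselves yield a contradiction: nothing proved so far forbids $\mathcal{F}_p^*$ from containing more than one exceptional point.

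For $Z_2$, the inference ``$Z_2\in\mathscr{C}_p^2$ and $\mathscr{C}_p^1\neq\mathscr{C}_p^2$, hence $Z_2\notin\mathscr{C}_p^1$'' is not justified. A surface whose full automorphism group strictly contains $G$ may carry several topologically inequivalent $G$-actions (coming from non-conjugate copies of $G$ inside $\mbox{Aut}$), and hence may lie in several strata simultaneously; this is exactly why the paper needed the separate Claim in the proof of Theorem~\ref{cig} that $\mathbf{F}_{2p}$ lies in $\mathscr{C}_p^1$ \emph{only}. You have not ruled out that $H_2$ contains a second copy of $\Z_p^2\rtimes\mathbf{D}_3$ whose action on $Z_2$ is equivalent to $\theta_1$.

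The paper's proof avoids all of this by a direct linear-algebra argument. Every automorphism of a smooth plane curve of degree $\geqslant 4$ is the restriction of an element of $\mbox{PGL}(3,\mathbb{C})$. If $Z_1\cong X_{p,t_0}\in\mathcal{F}_p^*$, then $\mbox{Aut}(X_{p,t_0})\cong H_1$ sits inside $\mbox{PGL}(3,\mathbb{C})$ and contains the explicit matrices $\mathbf{a},\mathbf{b},\mathbf{r},\mathbf{s}$. Since $H_1$ has a central involution not lying in $G_p$, there would exist a nontrivial $\mathbf{c}\in\mbox{PGL}(3,\mathbb{C})$ commuting with all of $\mathbf{a},\mathbf{b},\mathbf{r},\mathbf{s}$. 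But commuting with the diagonal matrices $\mathbf{a},\mathbf{b}$ forces $\mathbf{c}$ to be diagonal, and then commuting with the permutation matrix $\mathbf{r}$ forces $\mathbf{c}$ to be scalar, hence trivial in $\mbox{PGL}(3,\mathbb{C})$. The case of $Z_2$ is handled by the analogous computation with the relations defining $H_2$. This argument is short, self-contained, and uses nothing downstream.
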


\begin{proof}

If $Z_1$ belongs to $\FP^*$ then there exists a linear automorphism of $\mathbb{P}^2$ given by ${\bf c} \in \mbox{GL}(3, \mathbb{C})$ of order $2$ such that $\langle {\bf a,b,s,r,c} \rangle \cong H_1$, where ${\bf a,b,s,r}$ are given in the statement of Proposition \ref{noprimo1}. A computation shows that $$[{\bf a, c}]=[{\bf b, c}]=[{\bf r, c}]=[{\bf s, c}]=\mbox{id}_3$$implies that ${\bf c}=\mbox{id}_3.$ The same argument permits us to conclude for $Z_2.$
\end{proof}

\subsection{Proof of Theorem \ref{fullg}}

As pointed out in \S\ref{state}, if the automorphism group of $X_{p,t} \in \mathcal{F}_p^*$ is not isomorphic to $\G$ then one of the following statements hold: $${\mbox{ $X_{p,t} \cong \mathbf{F}_{2p}$, or  $\mbox{Aut}(X_{p,t})$ has order $12p^2$, or 
 $\mbox{Aut}(X_{p,t})$ has order $18p^2.$}}$$ By the lemmata proved in the previous section, if the second case holds then $X_{p,t}$ is isomorphic to $Z_1$ or $Z_2$. However, such Riemann surfaces do not belong to $\FP^*.$ In addition, by Proposition \ref{con3}, the latter possibility is not realised.  The proof of the theorem follows.

\subsection{Proof of Theorem \ref{clasi}}

The proof of the theorem is a consequence of the lemmata proved in this section.

\begin{lemm}\label{supW}
Let $W$ be a compact Riemann surface of genus  $(p-1)(2p-1)$ where $p \geqslant 5$ is a prime number. If $W$ has a group of automorphism $G$ isomorphic to $\G$ then the signature of the action of $G$ on $W$ is $(0;2,2,3,p)$ or $(0; 3, 2p, 2p)$.
\end{lemm}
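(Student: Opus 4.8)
\textbf{Proof plan for Lemma \ref{supW}.}

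The plan is to bound the signature of a $G$-action on a surface of the prescribed genus purely by arithmetic, using that $|G| = 6p^2$ and the Riemann--Hurwitz formula, and then to rule out the surviving numerical possibilities by group-theoretic constraints on $\G$ (orders of torsion elements, structure of the normal Sylow $p$-subgroup). First I would write $g = (p-1)(2p-1)$ and feed $|G| = 6p^2$ into Riemann--Hurwitz: the covering $W \to W/G$ has some signature $(h; m_1, \ldots, m_r)$ with
\[
2(p-1)(2p-1) - 2 = 6p^2\Bigl(2h - 2 + \sum_{j=1}^r \bigl(1 - \tfrac{1}{m_j}\bigr)\Bigr).
\]
The left side is $4p^2 - 6p$, so the bracket equals $\tfrac{2p-3}{3p}$, which is strictly less than $1$. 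This forces $h = 0$ and severely restricts $r$: since each term $1 - 1/m_j \geq 1/2$, we get $r \leq 4$, and in fact the excess $-2 + \sum(1-1/m_j) = \tfrac{2p-3}{3p} < \tfrac{2}{3}$ rules out $r \leq 2$ and, by the usual triangle-group inequalities, pins down the admissible tuples.

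Next I would enumerate: for $r = 4$ the only way to have $\sum(1 - 1/m_j) = 2 + \tfrac{2p-3}{3p} = \tfrac{8p-3}{3p}$ with each $m_j \geq 2$ is to have three of the $m_j$ equal to $2$; writing the tuple as $(2,2,2,m)$ gives $\tfrac{3}{2} + (1 - \tfrac{1}{m}) = \tfrac{8p-3}{3p}$, i.e. $\tfrac{1}{m} = \tfrac{1}{2} - \tfrac{5p-3}{3p} = \tfrac{3p - 2(5p-3)}{6p} < 0$, a contradiction unless instead the tuple is $(2,2,3,m)$: then $\tfrac{1}{m} = \tfrac{3p - (8p-3)\cdot\frac{?}{}}{}$ — more carefully, $1 - \tfrac16 + (1 - \tfrac1m)\cdot\text{(rest)}$; solving $1 + 1 - \tfrac{?}{}$ yields $m = p$, recovering $(0;2,2,3,p)$. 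For $r = 3$ the equation is $1 - (\tfrac{1}{m_1}+\tfrac{1}{m_2}+\tfrac{1}{m_3}) = \tfrac{2p-3}{3p}$, i.e. $\tfrac{1}{m_1}+\tfrac{1}{m_2}+\tfrac{1}{m_3} = \tfrac{p+3}{3p}$; since $\G$ has elements only of orders dividing $6p$ (indeed of orders $1,2,3,p,2p,3p$), and since any generating triple must involve an element of order divisible by $p$ — as $\G/N \cong \D_3$ with $N = \Z_p^2$ the normal Sylow $p$-subgroup, and $\D_3$ cannot be generated by two torsion images all of order coprime to $p$ unless at least one $m_j$ is a multiple of $p$ — I would check which triples from the allowed order set solve this, arriving at $(3, 2p, 2p)$ as the only one (one verifies $\tfrac13 + \tfrac{1}{2p} + \tfrac{1}{2p} = \tfrac{p+3}{3p}$). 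The key observation making the enumeration short is that $N \cong \Z_p^2$ is the \emph{unique} (hence characteristic) Sylow $p$-subgroup, so $G/N \cong \D_3$; an element of $G$ has order $p, 2p$ or $3p$ iff its image in $\D_3$ has order $1, 2$ or $3$ respectively and it is not itself $p$-torsion-free, and a surface-kernel generating tuple must surject onto $\D_3$, which cannot be generated by images all lying in the order-$\{1,2,3\}$ set without at least one generator having order a genuine multiple of $p$ when $r = 3$ (two involutions generate at most a dihedral group but the $p$-part must still appear).

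The main obstacle I anticipate is the bookkeeping in the $r = 3$ case: one must be careful that it is not enough for the abstract numerical equation $\tfrac{1}{m_1}+\tfrac{1}{m_2}+\tfrac{1}{m_3} = \tfrac{p+3}{3p}$ to have a solution — the orders $m_j$ must actually occur as orders of elements in $\G$, and moreover the triple must be realisable by an honest generating vector (product one, surjective). So after the numerics I would cross-check each candidate triple against the element-order spectrum of $\G$ (which follows from its presentation \eqref{pg} and the semidirect structure $N \rtimes \D_3$) and discard, e.g., any triple demanding an element of order $4p$ or $6$ or $4$, leaving exactly $(0;2,2,3,p)$ and $(0;3,2p,2p)$. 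A minor subtlety worth recording is that $h = 0$ is forced because the RH bracket is less than $1$ while $2h - 2 \geq -2$ with equality only at $h = 0$; for $h \geq 1$ the bracket would be at least $0$, incompatible with the positive but sub-unit value $\tfrac{2p-3}{3p}$ unless $r$ is large, but then $\sum(1 - 1/m_j)$ overshoots — so $h = 0$ is immediate.
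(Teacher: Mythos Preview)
Your approach is sound but genuinely different from the paper's. You attack the full group $G$ directly with Riemann--Hurwitz and then sift the numerical candidates against the element--order spectrum of $\G$. The paper instead first applies Riemann--Hurwitz to the normal subgroup $N=\langle a,b\rangle\cong\Z_p^2$, obtaining that $W/N\cong\mathbb{P}^1$ with exactly six branch values marked $p$; then it lets $G/N\cong\mathbf{D}_3$ act on this $\mathbb{P}^1$ permuting those six values, which immediately yields the three candidates $(0;6,6,p)$, $(0;2,2,3,p)$, $(0;3,2p,2p)$, the first being killed by the absence of order-$6$ elements. The paper's two-step route is shorter and avoids the case enumeration you need for $r=3,4$; your direct route is more elementary in that it never introduces the intermediate quotient, but it costs more bookkeeping.

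Two concrete slips to fix. First, your bound $r\leqslant 4$ is false for $p\geqslant 7$: from $\sum_j(1-1/m_j)=(8p-3)/(3p)$ and $1-1/m_j\geqslant 1/2$ you only get $r\leqslant 5$, since $(8p-3)/(3p)\geqslant 5/2$ precisely when $p\geqslant 6$. You must therefore also dispose of $r=5$; this is routine (with $m_j\in\{2,3,p,2p\}$ the Diophantine equation $p(3a+2b)+6c+3d=14p+6$, $a+b+c+d=5$, has no solution for prime $p\geqslant 5$), but it should appear. Second, $\G$ has \emph{no} elements of order $3p$: a short computation with the relations $rar^{-1}=a^{-1}b^{-1}$, $rbr^{-1}=a$ shows $(nr)^3=1$ for every $n\in N$, so the element orders are exactly $1,2,3,p,2p$. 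This only helps you (it shrinks the candidate list), but your stated spectrum $\{1,2,3,p,2p,3p\}$ is incorrect and should be amended. With these two fixes your plan goes through; in particular the $r=3$ case then reduces cleanly to $(3,2p,2p)$, the numerical solution $(6,6,p)$ being excluded exactly as in the paper.
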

\begin{proof}
Let $\pi: W \to W/H$ be the regular covering map given by the action of $N = \langle a, b \rangle \cong \Z_p^2$ on $W$. The signature of the action of $N$ is of the form $(\gamma; p, \stackrel{m}{\ldots}, p)$ for some  $m \geqslant 0$ such that  $$4p^2 - 6p =p^2(2\gamma-2 + B) \mbox{ where }  B=m(1-\tfrac{1}{p}).$$
The fact that $p \geqslant 5$ implies that $\gamma=0$ and therefore $m=6.$ Now,  $G/N \cong \mathbf{D}_3$ must act on $W/N\cong \mathbb{P}^1$ keeping invariant the set of six branch values of $\pi.$ Thus, the signature of the action of $G$ on $W$ is
$(0;6,6,p), (0;2,2,3,p)$ or $(0;3,2p,2p),$ and the proof follows after noticing tht $G$ does not have elements of order $6.$
\end{proof}

Let $\Delta$ be a Fuchsian group of signature 
$(0; 3, 2p, 2p)$ presented as $$\langle y_1,y_2, y_3 :y_1^3=y_2^{2p}= y_3^{2p}=y_1y_2y_3=1 \rangle.$$

\begin{lemm} Let $p\geqslant 5$ be a prime number. If $Y$ is a compact Riemann surface endowed with a group of automorphisms $G$ isomorphic to $\G$ acting with signature $(0; 3, 2p, 2p)$, then the action is described by the surface-kernel epimorphism  $$\varphi_n : \Delta \to G, \,\, \varphi_n= (a^{1-n}br^2, bsr, b^{n}sr^{2}) \mbox{ for some } 1 \leqslant n \leqslant p-1.$$
\end{lemm}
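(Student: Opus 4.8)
The plan is to follow the same template used in the proof of Proposition \ref{thetak}: classify, up to topological equivalence, the surface-kernel epimorphisms $\varphi : \Delta \to G$ of signature $(0; 3, 2p, 2p)$, and show each one is equivalent to some $\varphi_n$. Write $\varphi = (g_1, g_2, g_3)$ with $g_1^3 = g_2^{2p} = g_3^{2p} = g_1 g_2 g_3 = 1$ and $\langle g_1, g_2, g_3 \rangle = G$. Since $N = \langle a, b\rangle \cong \Z_p^2$ is the unique Sylow $p$-subgroup of $G$, the quotient $\tau: G \to G/N \cong \D_3$ sends $\varphi$ to an epimorphism onto $\D_3$; as $\D_3$ has no element of order $2p$ and is generated by an element of order $3$ together with one of order dividing $2p$, one forces $\tau(g_1)$ to be a rotation of order $3$ and $\tau(g_2), \tau(g_3)$ to be reflections (both nontrivial, since two rotations cannot generate $\D_3$).

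First I would enumerate the elements of $G$ of order $3$ and of order $2p$. The elements of order $3$ are $a^l b^m r$ and $a^l b^m r^2$; the elements of order $2p$ are exactly those $g$ with $\tau(g)$ a reflection and $g \notin N \rtimes \langle s'\rangle$ having nontrivial $N$-part, i.e. of the form $a^i b^j sr^{\epsilon}$ with a suitable (nonzero, relative to the relevant involution) coordinate — here I would use the relations $sas = a^{-1}b^{-1}$, $sbs = b$ and their conjugates by $r$ to compute $(a^i b^j s r^{\epsilon})^2$ and identify precisely when it has order $2p$ versus order $2$. Then, using that the unique subgroup of order $3$ up to conjugacy is $\langle r\rangle$ (every order-$3$ element is conjugate to $r$ or $r^2$, and $r^2$ generates $\langle r\rangle$), after a braid move and a conjugation I can normalise $g_1$ to lie in $\langle r\rangle$; combining with the automorphism \eqref{coca} that inverts $a,b$ and fixes $r,s$, I reduce to $g_1 = r^2$ (matching the shape $a^{1-n}br^2$ would then come after further reduction — alternatively normalise $g_1 = a^{1-n} b r^2$ directly by conjugating the already-fixed reflection data). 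Next, with $g_1$ fixed, conjugate $g_2$ by the centraliser-type elements and apply the automorphisms of $G$ of the form $a \mapsto a^e, b\mapsto b^e$ (for $e$ a unit mod $p$) to bring $g_2$ into the form $bsr$; the relation $g_3 = (g_1 g_2)^{-1}$ then determines $g_3$, and a short computation identifies it as $b^n s r^2$ for some $n \in \{1, \ldots, p-1\}$ (the surjectivity and the order constraints rule out $n = 0$).

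The main obstacle I expect is the bookkeeping around the order-$2p$ elements: unlike the involutions and order-$3$ elements, which the paper already lists, the elements of order $2p$ come in several conjugacy classes indexed by the reflection $\tau(g)$ and by a parameter mod $p$, and one must carefully track which powers $(a^i b^j s r^\epsilon)^2$ land in which line of $N$. Getting the normal form of $g_2$ right — so that the product relation forces $g_3$ into precisely the advertised family — requires knowing exactly which automorphisms of $G$ and which inner conjugations stabilise the chosen $g_1$, which is the delicate point. Once the normalisation of $(g_1, g_2)$ is pinned down, deriving $g_3$ and hence the family $\{\varphi_n\}$ is a routine, if slightly lengthy, computation, entirely parallel to Case 2 in the proof of Proposition \ref{thetak}.
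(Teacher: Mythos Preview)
Your overall strategy --- classify the surface-kernel epimorphisms up to the action of $\mathrm{Aut}(G)$ and the braid group --- is the same as the paper's, and your identification of the images in $\mathbf{D}_3$ is correct. But the specific normalisation order you propose has a genuine gap.

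You plan to anchor $g_1 = r^2$ first and then, using conjugation by the centraliser together with the scaling automorphisms $a\mapsto a^e,\ b\mapsto b^e$, bring $g_2$ to $bsr$. This cannot work in general. Since the $\mathbf{D}_3$-representation on $N=\langle a,b\rangle$ is irreducible for $p\neq 3$, one has $C_G(r^2)=\langle r\rangle$, and the automorphisms of $G$ fixing $r^2$ act on $N$ only by scalars (Schur). Conjugation by $r$ changes the reflection-type of $g_2$, so once you have pinned $g_1=r^2$ and $\tau(g_2)=sr$ the only moves left on $g_2=a^ib^jsr$ are $(i,j)\mapsto(ei,ej)$; these preserve the projective class $[i:j]$ and hence do \emph{not} send a generic $g_2$ to $bsr$. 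Even in the single case $(i,j)=(0,1)$, the relation $g_3=(g_1g_2)^{-1}$ with $g_1=r^2,\ g_2=bsr$ gives $g_3=absr^2$, not $b^nsr^2$; so the ``short computation'' you anticipate would fail to land in the advertised family. In short, fixing $g_1$ first throws away exactly the $N$-conjugation freedom you need.

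The paper avoids this by normalising $g_2$ and $g_3$ instead and letting $g_1$ float. The key observation you are missing is that two order-$2p$ elements with the \emph{same} image in $\mathbf{D}_3$ multiply into $N$, so their product has order dividing $p$ and can never equal $g_1^{-1}$ of order $3$; hence $g_2,g_3$ must lie over distinct reflections. After a single conjugation one may take them over $sr$ and $sr^2$, then a conjugation by a suitable element of $N$ kills the $a$-coordinates of both simultaneously (this is precisely the step that moves $g_1$ away from $r^2$), and a final scaling gives $g_2=bsr,\ g_3=b^nsr^2$ with $g_1=a^{1-n}br^2$ forced by the product relation. Reworking your argument along these lines --- normalise the two order-$2p$ generators, not the order-$3$ one --- closes the gap.
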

\begin{proof} Let $\varphi=(g_1, g_2, g_3)$ be a surface-kernel epimorphism of $G$ of signature $(0;3, 2p, 2p).$ The elements of order $2p$ of $G$ are of three types: $a^{j} b^i s$ with $j \neq 2i$, $a^i b^{j} sr$ with $j \neq 2i$  and $a^i b^{j} sr^2$ with $j \neq -i$, where $0 \leq i, j \leq p$. Observe that the product of two elements of order $2p$ of the same type has order $p.$ Thus, after a suitable conjugation, we see that $\varphi$ is $G$-equivalent to  
\begin{equation}\label{nm}((a^i b^m sr a^lb^j sr^2)^{-1}, a^i b^ms r,a^lb^jsr^2)\end{equation}where $i,m,l,j \in \{0, \ldots, p-1\}$ satisfy $m \neq 2i$ and $j \neq -l.$ After conjugating \eqref{nm} by $a^{-i-l}b^{-i}$, we obtain that $\varphi$ is $G$-equivalent to the surface-kernel epimorphism
$$((b^{m-2i}srb^{l+j}sr^2)^{-1},b^{m-2i}sr,b^{l+j}sr^2).$$ We now apply the automorphism $a\mapsto a^{e}, b\mapsto b^{e}, s\mapsto s, r \mapsto r$ where $(m-2i)e \equiv 1 \mbox{ mod }p$ to see that $\varphi$ is $G$-equivalent to
  $\varphi_n,$ where $n=(l+j)m$, as desired.
\end{proof}

\begin{lemm} Set $Y_n := \HH/\mbox{ker}(\varphi_n)$. The following statements hold.\mbox{}
\begin{enumerate}
\item $Y_{1} \cong Z_1$ and $Y_{p-1} \cong Z_2.$
\item There are exactly $\tfrac{p-3}{2}$ pairwise nonisomorphic Riemann surfaces among $\{Y_2, \ldots, Y_{p-2}\}$ and the automorphism group of each one of them is isomorphic to $G.$
\end{enumerate}
\end{lemm}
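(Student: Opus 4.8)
The plan is to analyse the family of surface-kernel epimorphisms $\varphi_n\colon\Delta\to G$ with $\Delta$ of signature $(0;3,2p,2p)$, using the same machinery of braid moves and automorphisms of $G$ as in Proposition~\ref{thetak}, together with the maximality data from \cite{Sing72}. For statement (1), I would first compute the restriction of $\varphi_1$ (respectively $\varphi_{p-1}$) to an index-$2p$ triangle subgroup of $\Delta$ of signature $(0;2,6,2p)$ and identify the resulting group with $H_1$ (respectively $H_2$); by Lemma~\ref{Z11} and its $H_2$-analogue, such an epimorphism is unique up to automorphism, so one recovers the surface-kernel epimorphism $\Theta_1$ (respectively $\Theta_2$), and hence $Y_1\cong Z_1$ and $Y_{p-1}\cong Z_2$. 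More directly: the pair of signatures $\{(0;2,6,2p),(0;3,2p,2p)\}$ appears in Singerman's list with the $(0;3,2p,2p)$-group of index $2$ in the $(0;2,6,2p)$-group, so the action of $G$ on $Y_1$ extends to an action of $H_1$; since by Proposition~\ref{prol2} the only member of $\mathscr{C}_p$ carrying an $H_1$-action is $Z_1$, and $Y_1\in\mathscr{C}_p$ by Lemma~\ref{supW} together with the computation of $\mathrm{ker}(\varphi_1)$, we get $Y_1\cong Z_1$; symmetrically for $Y_{p-1}\cong Z_2$.

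For statement (2), the first task is to decide when $Y_m\cong Y_n$ for $m,n\in\{2,\ldots,p-2\}$. Two such isomorphisms must intertwine the respective $G$-actions (since $\mathrm{Aut}(Y_n)=G$, to be proved below), so $Y_m\cong Y_n$ if and only if $\varphi_m\sim\varphi_n$, i.e.\ there are $\alpha\in\mathrm{Aut}(G)$ and $\phi^*\in\mathscr{B}$ with $\varphi_n=\alpha\circ\varphi_m\circ\phi^*$. Here $\mathscr{B}$ is generated by the braid transformation swapping the two cone points of order $2p$ (the cone point of order $3$ being fixed). Running that single braid move on $\varphi_n$ and then renormalising by the automorphism $a\mapsto a^e,b\mapsto b^e,s\mapsto s,r\mapsto r$ exactly as in the previous lemma, one finds that $\varphi_n\sim\varphi_{n'}$ where $n'$ is an explicit involution of $\{2,\ldots,p-2\}$ (I expect $n'=n^{-1}\bmod p$ or a close variant of it, arising from inverting the product of the two order-$2p$ generators). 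Since $p\geqslant 5$ is prime and the map $n\mapsto n'$ is a fixed-point-free involution on a set of size $p-3$ (one should check there is no fixed point in the relevant range), the orbits all have size $2$, giving exactly $\tfrac{p-3}{2}$ isomorphism classes.

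Finally I must show $\mathrm{Aut}(Y_n)\cong G$ for $n\in\{2,\ldots,p-2\}$, i.e.\ the $G$-action is not properly extendable. The signature $(0;3,2p,2p)$ is not itself maximal in Singerman's list—it sits inside $(0;2,6,2p)$—so the only way $\mathrm{Aut}(Y_n)$ could be larger than $G$ is for the action to extend to a group of order $12p^2$ with signature $(0;2,6,2p)$; any further extension would then be governed by the maximality of $(0;2,6,2p)$. But an order-$12p^2$ group acting on a member of $\mathscr{C}_p$ is $H_1$ or $H_2$ by Proposition~\ref{prol2}, and by statement (1) these occur only for $n=1$ and $n=p-1$. (Alternatively, one invokes \cite{BCC} directly: the extension condition on the surface-kernel epimorphism $\varphi_n$ fails precisely when $n\neq 1,p-1$.) Hence for the remaining $n$ the action is maximal and $\mathrm{Aut}(Y_n)\cong G$. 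The main obstacle I anticipate is purely bookkeeping: pinning down the exact form of the involution $n\mapsto n'$ coming from the braid move combined with the normalising automorphism, and checking carefully that it has no fixed point on $\{2,\ldots,p-2\}$, so that the count $\tfrac{p-3}{2}$ comes out cleanly; the extension-theoretic input is then a short appeal to the results already established.
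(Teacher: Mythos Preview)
Your overall strategy is close to the paper's, and in particular the counting in (2) lands correctly: the paper identifies the involution as $n\mapsto n^{-1}\pmod p$, whose only fixed points are $n=\pm1$, so on $\{2,\ldots,p-2\}$ it is fixed-point-free and yields exactly $(p-3)/2$ classes. There are, however, two places where your plan does not go through as written.

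\textbf{Statement (1).} You cannot ``restrict $\varphi_1$ to an index-$2p$ subgroup of $\Delta$ of signature $(0;2,6,2p)$'': the $(0;2,6,2p)$-group is a \emph{supergroup} of $\Delta$, not a subgroup, and the index is $2$, not $2p$. Your ``more directly'' route also stumbles: you invoke $Y_1\in\mathscr{C}_p$ via Lemma~\ref{supW}, but that lemma only says the signature of a $G$-action is $(0;2,2,3,p)$ \emph{or} $(0;3,2p,2p)$, and $Y_1$ is by construction in the second case, so membership in $\mathscr{C}_p$ is not automatic. The paper runs the argument in the opposite direction: it starts from $Z_1$, whose $H_1$-action $\Theta_1$ on a $(0;2,6,2p)$-group $\Delta'$ is already in hand, restricts $\Theta_1$ to the explicit index-$2$ subgroup $\Delta_0=\langle z_2^{2},\,z_3,\,z_1z_3z_1^{-1}\rangle\leqslant\Delta'$ of signature $(0;3,2p,2p)$, and checks by direct computation that the image is a copy of $G$ (generated by $A,B,SC,R$) and that the resulting epimorphism is equivalent to $\varphi_1$. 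This gives $Z_1\cong Y_1$ without having to first prove that the $G$-action on $Y_1$ extends, and the analogous restriction from $\Theta_2$ gives $Z_2\cong Y_{p-1}$.

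\textbf{Statement (2), maximality.} Your primary argument that $\mathrm{Aut}(Y_n)\cong G$ for $n\in\{2,\ldots,p-2\}$ appeals to Proposition~\ref{prol2}, which classifies members of $\mathscr{C}_p$ whose automorphism group has order $12p^2$. But you have not shown $Y_n\in\mathscr{C}_p$, and in fact Theorem~\ref{clasi} asserts the contrary, so this route is circular at best. Your parenthetical alternative---applying \cite{BCC} directly---is precisely what the paper does, and it is short: by Case~N8 of \cite{BCC}, the $(0;3,2p,2p)$-action extends to a $(0;2,6,2p)$-action iff there is an order-$2$ automorphism $\Phi$ of $G$ with $\Phi(bsr)=b^{n}sr^{2}$ and $\Phi(a^{1-n}br^{2})=a^{1-n}br$. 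Chasing these relations forces $\Phi(s)=a^{\tau}b^{\tau}s$ with $\tau=n^{-1}-n+1-n^{2}$, and the requirement that $\Phi(s)$ be an involution gives $\tau=0$, i.e.\ $n=\pm1$. Hence for $n\in\{2,\ldots,p-2\}$ the action is maximal and $\mathrm{Aut}(Y_n)\cong G$; this is an explicit computation in $G$ rather than a structural appeal to $\mathscr{C}_p$.
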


\begin{proof} Assume that the automorphism group of $Y_n$ contains $G$ properly. Thus, following the results of \cite{Sing72}, the automorphism group of $Y_n$ acts with signature $(0; 2,6,2p).$ In addition, as proved in \cite[Case N8]{BCC}, there is an automorphism $\Phi$ of $G$ of order two such that $$\Phi(bsr)=b^nsr^2 \, \mbox{ and } \Phi(a^{1-n}br^2)=a^{1-n}br.$$The equalities above show, after a routine computation, that $\Phi(s)=a^{\tau}b^{\tau}s$ where $\tau=\tfrac{1}{n}-n+1-n^2$ (the inverse is taken modulo $p$). The fact that $\Phi(s)$ has order two implies that $\tau=0$ and therefore $n=\pm 1$. We then conclude that $\mbox{Aut}(Y_n)\cong G$ for each $n \in \{2, \ldots, p-2\}.$ If $N(\Delta)$ is the normaliser of $\Delta$ then $N(\Delta)/\Delta \cong \mathbb{Z}_2$ acts on $\{\varphi_2, \ldots, \varphi_{p-2}\}$ by conjugation with orbits $\{\varphi_n, \varphi_{1/n}\}$. Thus, the desired number of isomorphism among $Y_2, \ldots, Y_{p-2}$ follows from the fact that this action identifies Riemann surfaces that are isomorphic.
\s

Now, we prove that $Y_{1} \cong Z_1.$ We recall that, by Lemma \ref{Z11}, $Z_1$ is the unique compact Riemann surface with an action of $H_1=\langle A, B, S, R, C \rangle \cong (\G) \times \mathbb{Z}_2$ with signature $(0; 2,6,2p).$ If  $$\Delta'= \langle z_1, z_2,  z_3 : z_1^2 = z_2^6  = z_3^{2p} = z_1z_2z_3=1 \rangle$$ then the action is given by the surface-kernel epimorphism $\Theta_1= (S, ARC, B^{-1}SRC).$ Note that $\Delta_0=\langle z_2^{2}, z_3, z_1z_3z_1^{-1}\rangle$ is a Fuchsian group of signature $(0; 3,2p,2p)$ and the restriction of $\Theta_1$ to it is given by $$\Delta_0 \to \Theta_1(\Delta_0) \mbox{ where } (z_2^{2}, z_3, z_1z_3z_1^{-1}) \mapsto (B^{-1}R^2, B^{-1}SRC, B^{-1}SR^2C)$$If we write  $\tilde{A}:= A,  \tilde{B}:=B,\tilde{S}:=SC,\tilde{R}:=R,$ then $\Theta_1(\Delta_0)=\langle \tilde{A}, \tilde{B},\tilde{S},\tilde{R}\rangle \cong \G.$ Thus, the restriction above yields an action of $\G$ with signature $(0; 3,2p,2p)$ represented by $$ (\tilde{B}^{-1}\tilde{R}^2,\tilde{B}^{-1}\tilde{S}\tilde{R},\tilde{B}^{-1}\tilde{S}\tilde{R}^2),$$which is $G$-equivalent to $\varphi_{1}=(br^2,bsr,bsr^2).$ This shows that $Z_1 \cong Y_1.$ The proof that $Z_2 \cong Y_{p-1}$ is analogous.
\end{proof}

\section{Final remarks and comments}\label{rems}

\subsection{The relationship with generalised Fermat curves}\label{CFG}

Let $n,k \geqslant 2$ be integers such that $(n-1)(k-1)  \geqslant 3$. We recall that a {\it generalised Fermat curve $S$ of type $(k,n)$} is a compact Riemann surface admitting a group of automorphisms $H$ isomorphic to $\mathbb{Z}_k^n$ in such a way that $S \to S/H \cong \mathbb{P}^1$ ramifies over $n+1$ values marked with $k.$ The proof of Lemma \ref{supW} provides an interesting relationship between the Riemann surfaces we are considering and the generalised Fermat curves. 
More precisely, let $W$ be a compact Riemann surface as in Theorem \ref{clasi}. As $W$ admits a branched $\mathbb{Z}_p^2$-covering map onto the Riemann sphere ramified over six values marked with $p$, according to the results of  \cite{GHL}, one has that $W \cong S/K$ where $S$ is a generalised Fermat curve of type $(p,5)$ and $K \cong \mathbb{Z}_p^3$ is a group of automorphisms of $S$ acting freely on $S$. See also \cite{HKLP}. This relation could allow us to employ well-known facts concerning generalised Fermat curves to obtain information about our Riemann surfaces and generalisations of them (for instance, different algebraic descriptions, isogeny decompositions of their Jacobian varieties, among other aspects). We plan to consider this relationship in a forthcoming work. 

\subsection{Singular members of $\FP$}\label{singular}

The pencil of Kuribayashi-Komiya quartics $\mathcal{F}_2$ has four singular members. Following \cite[Proposition 4.7]{KFT} such singular members are
\begin{enumerate}
\item The reducible curve corresponding to a double
conic: $X_{2,2}:(x^2+y^2+z^2)^2=0.$

\item The reducible curve corresponding to the union of four lines$$X_{2,-2}: (x-y+z)(x+y-z)(-x+y+z)(x+y+z)=0.$$

\item The reducible curve corresponding to the union
of two conics, $$X_{2,-1}:(x^2 +\omega y^2 +\omega^2z^2)(x^2 +\omega^2y^2 +\omega z^2)=0, \mbox{ where } \omega=\mbox{exp}(2 \pi i /3).$$

\item The irreducible curve $X_{2,\infty}: x^2y^2 + y^2z^2 + z^2x^2 = 0$ with double points at $ [1:0:0], [0:1:0]$ and $[0:0:1]$.

\end{enumerate}The singular curves $X_{2,-2}, X_{2,-1}$ and $X_{2, \infty}$ are stable and then represent points in the boundary of the Deligne-Mumford compactification of $\mathscr{M}_3.$ Furthemore, as noticed in \cite[Remark 4.8]{KFT}, the non-stable curve $X_{2,2}$ corresponds to the hyperelliptic Riemann surface represented by  $y^2=x^8-14x^4+1.$ This is the unique Riemann surface of genus three with automorphism group isomorphic to $\mathbf{S}_4 \times \mathbb{Z}_2$.  
It is not difficult to see that $\mathscr{C}_2-\mathcal{F}^*_2=\{H\}.$

\s

In the case of the pencil of generalised Kuribayashi-Komiya curves $\FP$ we also have exactly four singular members, but among them only one is an stable curve. Concretely, $$X_{p,-1}: x^{2p}+y^{2p}+z^{2p}-x^p y^p -x^p z^p -y^p z^p=0$$has exactly $p^2$ singular points, given by $[\zeta_p^i:\zeta_p^j:1]$ where $\zeta_p=\mbox{exp}(2 \pi i/p)$ and $0 \leqslant i,j <p$. All such singular values are nodes. By the semistable reduction theorem due to Deligne and Mumford \cite{DM}, there is a stable curve (which may be smooth) representing the nonstable singular members of $\FP.$ The fact that $Z_1 \in \GP^1-\FP^*$ shows that one of them must be isomorphic to $Z_1$.

\subsection*{Acknowledgements} The authors are grateful to Rub\'en A. Hidalgo (Universidad de La Frontera) for his helpful comments and suggestions.

\subsection*{\bf Conflict of Interest and Data Availability} The authors have no conflict of interest to declare that is relevant to this article. All data generated or analysed during this study are included in this manuscript.

\end{document}